\newtheorem{theorem}{Theorem}[section]
\newtheorem{lemma}[theorem]{Lemma}
\theoremstyle{definition}
\newtheorem{definition}[theorem]{Definition}
\theoremstyle{remark}
\newtheorem{remark}[theorem]{Remark}
\numberwithin{equation}{section}
\begin{document}

\title[Best Isoperimetric Constants for $(H^1, BMO)$-Normal Conformal Metrics]{Toward Best Isoperimetric Constants for $(H^1,BMO)$-Normal Conformal Metrics on $\mathbb R^n$, $n\ge 3$ \ $^\dag$}


\thanks{$^\dag$ Supported in part by Natural Science and
Engineering Research Council of Canada.}



\author{Jie Xiao}
\address{Department of Mathematics and Statistics, Memorial University of Newfoundland, St. John's, NL A1C 5S7, Canada}
\email{jxiao@math.mun.ca}

\subjclass[2000]{Primary 53A30, 31B35; Secondary 42B30}

\date{}


\keywords{}

\begin{abstract}
The aim of this article is: (a) To establish the existence of the
best isoperimetric constants for the $(H^1,BMO)$-normal conformal
metrics $e^{2u}|dx|^2$ on $\mathbb R^n$, $n\ge 3$, i.e., the
conformal metrics with the Q-curvature orientated conditions
$$
(-\Delta)^{n/2}u\in H^1(\mathbb R^n)\ \ \&\ \
u(x)=\hbox{const.}+\frac{\int_{\mathbb
R^n}(\log\frac{|\cdot|}{|x-\cdot|})(-\Delta)^{n/2}
u(\cdot)\,d\mathcal{H}^n(\cdot)}{2^{n-1}\pi^{n/2}\Gamma(n/2)};
$$
(b) To prove that $(n\omega_n^\frac1n)^\frac{n}{n-1}$ is the optimal
upper bound of the best isoperimetric constants for the complete
$(H^1,BMO)$-normal conformal metrics with nonnegative scalar
curvature; (c) To find the optimal upper bound of the best
isoperimetric constants via the quotients of two power integrals of
Green's functions for the $n$-Laplacian operators
$-\hbox{div}(|\nabla u|^{n-2}\nabla u)$.
\end{abstract}
\maketitle

\section{Introduction}

The original motivation of this paper goes back to one of the
geometric Q-curvature problems posed on Lawrence J. Peterson's
edited article -- Future Directions of Research in Geometry: A
Summary of the Panel Discussion at the 2007 Midwest Geometry
Conference (cf. \cite{Pet}).
\medskip

\noindent {\bf Alice Chang's Question}: {\it A very general question
is to ask ``What is the geometric content of Q-curvature?" For
example, we know that one can associate the scalar curvature with
the conformally invariant constant called the ``Yamabe constant".
When this constant is positive, it describes the best constant (in a
conformally invariant sense) of the Sobolev embedding of $W^{1,2}$
into $L^{2n/(n-2)}$ space; this in itself can be viewed as a
$W^{1,2}$ version of the isoperimetric inequality. It would be
interesting to know if Q-curvature, or the conformally invariant
quantity $\int Q$ associated with it, satisfies some similar
inequalities with geometric content.}
\medskip

To find out a way to attack this question let us choose a
conformally flat manifold $(\mathbb R^n, g)$ as the acting model --
the $2\le n$-dimensional Euclidean space $\mathbb R^n$ equipped with
the conformal metric $g=e^{2u}g_0$, where $u$ is a real-valued
smooth function on $\mathbb R^n$, i.e., $u\in C^\infty(\mathbb
R^n)$, and $g_0=|dx|^2=\sum_{k=1}^n dx_k^2$ is the standard
Euclidean metric on $\mathbb R^n$. For the convenience of statement
let us also agree to several more basic conventions. The symbols
$\Delta$ and $\nabla$ denote the Laplace operator
$\sum_{k=1}^n\partial^2/\partial x_k^2$ and the gradient vector
$(\partial/\partial x_1,...,\partial/\partial x_n)$ over $\mathbb
R^n$. The volume and surface area elements of the metric $g$ are
determined via
$$
dv_{g,n}=e^{nu}d\mathcal{H}^n\quad\hbox{and}\quad
ds_{g,n}=e^{(n-1)u}d\mathcal{H}^{n-1}
$$
where $\mathcal{H}^k$ stands for
the $k$-dimensional Hausdorff measure on $\mathbb R^n$. Thus, the
volume and surface area of the open ball $B_r(x)$ and its boundary
$\partial B_r(x)$ with radius $r>0$ and center $x\in\mathbb R^n$
take the following values:
$$
v_{g,n}\big(B_r(x)\big)=\int_{B_r(x)}e^{nu}\,d\mathcal{H}^n\quad\hbox{and}\quad
s_{g,n}\big(\partial B_r(x)\big)=\int_{\partial
B_r(x)}e^{(n-1)u}\,d\mathcal{H}^{n-1}.
$$
At the same time, on the conformally flat manifold $(\mathbb R^n,
g)$ there are two types of curvature -- one is the Ricci's scalar
curvature
$$
S_{g,n}=-2(n-1)e^{-2u}\Big(\Delta u+\frac{n-2}{2}|\nabla
u|^2\Big);
$$
and the other is the Paneitz's Q-curvature which, according as
\cite{FeGr} and \cite{Nd}, is given by
$$
Q_{g,n}=e^{-nu}(-\Delta)^{n/2} u.
$$
Here and hereafter, for $\alpha\in\mathbb R$ the operator
$(-\Delta)^{\alpha/2}$ is initially defined via the Fourier
transform
$$
\widehat{(-\Delta)^{\alpha/2}
f}(x)=(2\pi|x|)^\alpha\hat{f}(x)=(2\pi|x|)^\alpha\int_{\mathbb
R^n}e^{2\pi i x\cdot y}f(y)\,d\mathcal{H}^n(y),
$$
where $f$ is of the Schwartz class, denoted $f\in\mathcal S(\mathbb
R^n)$, that is,
$$
f\in C^\infty(\mathbb R^n)\quad\hbox{and}\quad
\sup_{x=(x_1,...,x_n)\in\mathbb
R^n}(1+|x|)^N\Big|\frac{\partial^{k_1+\cdots+k_n}f}{\partial^{k_1}
x_1\cdots\partial^{k_n}x_n}(x)\Big|<\infty
$$
for all multi-indices $(k_1,...,k_n)$ and natural numbers $N$. Of
course, the domain of $(-\Delta)^{\alpha/2}$ can be extended to
$C^\infty(\mathbb R^n)$ via the duality pairing:
$$
\langle (-\Delta)^{\alpha/2} f,h\rangle=\langle
f,(-\Delta)^{\alpha/2} h\rangle\quad\hbox{where}\quad f\in
C^\infty(\mathbb R^n)\quad\hbox{and}\quad h\in\mathcal{S}(\mathbb
R^n).
$$

In addition to the operators $S_{g,n}$ and $Q_{g,n}$, there is the
third operator related to the Laplacian, that is, the $n$-Laplacian
$$
\Delta_n u=-\hbox{div}\big(|\nabla u|^{n-2}\nabla u\big).
$$
Associated with this operator is the $n$-Green function
$G_\Omega(\cdot,\cdot)$ of a domain $\Omega\subset\mathbb R^n$ with
the boundary $\partial\Omega\not=\emptyset$, that is, the weak
solution to the Dirichlet problem:
\[
\left\{\begin{array} {r@{\quad,\quad}l}
\Delta_n G_{\Omega,n}(x,y)=\delta_y(x) & x\in\Omega\\
G_{\Omega,n}(x,y)=0 & x\in\partial\Omega.
\end{array}
\right.
\]
Here $\delta_y(x)$ is the Dirac measure. Of course, such a weak
solution does not always exist. Consequently, when a domain is
bounded and has the $n$-Green's function, the domain is said to be
bounded regular.

Since the scalar curvature $S_{g,2}/2$ and the Q-curvature $Q_{g,2}$
coincide with the classical Gaussian curvature $K$:
$$
\frac{S_{g,2}}{2}=Q_{g,2}=e^{-2u}(-\Delta)u=e^{-2u}\Delta_2 u=K
$$
which completely characterizes the curvature of the two-dimensional
conformally flat manifold $(\mathbb R^2, g)$, Chang's question leads
us to recall an easily-verified consequence of Li-Tam's
isoperimetric inequality (cf. \cite[Theorems 5.1-5.2 \& Corollary
5.3]{Li-Tam}), Finn's isoperimetric deficit formula \cite{Fin} and
Huber's isoperimetric inequality \cite[Theorem 3]{Hu}:
\medskip

\noindent {\bf Two-dimensional Theorem}: {\it For $u\in
C^\infty(\mathbb R^2)$ suppose $g=e^{2u}g_0$ is a conformal metric
on $\mathbb R^2$. Let
\begin{equation}\label{eq0}
\int_{\mathbb R^2}|Q_{g,2}|\,
dv_{g,2}<\infty\quad\hbox{and}\quad\int_{\mathbb
R^2}Q_{g,2}\,dv_{g,2}<2\pi.
\end{equation}
Then

\noindent{\rm(i)}
\begin{equation}\label{eq0a}
\kappa_{g,2}=\inf_{\Omega}\frac{\big(s_{g,2}(\partial\Omega)\big)^2}{v_{g,2}(\Omega)}=\inf_{f}\frac{\Big(\int_{\mathbb
R^2}|\nabla f|\,dv_{g,2}\Big)^2}{\int_{\mathbb R^2}|f|^2\,dv_{g,2}}
\end{equation}
is a positive number depending only on $(\mathbb R^2,g)$, where the
left-hand infimum is taken over all pre-compact domains
$\Omega\subseteq\mathbb R^2$ with $C^1$-boundary $\partial\Omega$,
and the right-hand infimum ranges over all $C^1$-functions $f$ with
compact support in $\mathbb R^2$.
\smallskip

\noindent{\rm(ii)}
\begin{equation}\label{eq0b}
\kappa_{g,2}=2\Big(2\pi-\int_{\mathbb R^2}Q_{g,2}\,dv_{g,2}\Big)
\end{equation}
holds for $Q_{g,2}\ge 0$, where $\kappa_{g,2}=4\pi$ if and only if
$g=g_0$. }
\medskip

Clearly, an appropriate higher-dimensional analogue of the
previously-quoted two-dimensional theorem (including condition
(\ref{eq0}) and assertions (i)-(ii)) would suggest a solution to
Chang's question for the Euclidean manifold $(\mathbb R^n,g)$. For
future use, the symbol $H^1(\mathbb R^n)$ (cf. \cite[Theorem
6.7.4]{Gr}) denotes the Hardy space of all real-valued functions $f$
on $\mathbb R^n$ that satisfy
$$
\|f\|_{H^1}=\int_{\mathbb R^n}|f|d\mathcal{H}^n+
\sum_{j=1}^n\int_{\mathbb R^n}|R_j(f)|d\mathcal{H}^n<\infty,
$$
where the Riesz transforms
$$
R_j(f)(x)=\lim_{\epsilon\to
0}\frac{\Gamma\big(\frac{n+1}{2}\big)}{\pi^\frac{n+1}{2}}\int_{|y|\ge\epsilon}
y_j|y|^{-n-1}f(x-y)\,d\mathcal{H}^n(y),\quad j=1,...,n
$$
are well-determined for $f\in L^1(\mathbb R^n)$ and the classical
gamma function $\Gamma(\cdot)$.

In addition, the best isoperimetric constant for a given conformal
metric $g$ on $\mathbb R^n$ is defined by
\begin{equation}\label{eq0k}
\kappa_{g,n}=\inf_{\Omega\in BDC(\mathbb
R^n)}\frac{\big(s_{g,n}\big(\partial
\Omega\big)\big)^{\frac{n}{n-1}}}{v_{g,n}\big(\Omega\big)},
\end{equation}
where $BDC(\mathbb R^n)$ represents the class of all bounded domains
$\Omega\subset\mathbb R^n$ with $C^1$-smooth boundary
$\partial\Omega$.

According to Chang's question as well as (\ref{eq0a}), our focus
should be on deciding when the sharp constant in (\ref{eq0k}) is
positive. Below is the outcome.

\begin{theorem}\label{t2} For $u\in C^\infty(\mathbb R^n)$ suppose $g=e^{2u}g_0$ is a conformal metric on $\mathbb R^n$, $n\ge
3$. If $g$ is $(H^1,BMO)$-normal, namely, if
\begin{equation}\label{eq11}
(-\Delta)^{n/2}u\in H^1(\mathbb R^n)
\end{equation}
and there is a constant $c$ such that
\begin{equation}\label{eq11c}
u(x)=c+\frac{\int_{\mathbb
R^n}\Big(\log\frac{|y|}{|x-y|}\Big)(-\Delta)^{n/2}
u(y)\,d\mathcal{H}^n(y)}{2^{n-1}\pi^{n/2}\Gamma(n/2)}\quad\hbox{for}\quad
x\in\mathbb R^n,
\end{equation}
then
\begin{equation}\label{eq33}
0<\kappa_{g,n}=\inf_{f\in C^1_0(\mathbb
R^n)}\frac{\big(\int_{\mathbb R^n}|\nabla
f|\,dv_{g,n}\big)^{\frac{n}{n-1}}}{\int_{\mathbb
R^n}|f|^{\frac{n}{n-1}}\,dv_{g,n}}<\infty,
\end{equation}
where the infimum ranges over $f\in C^1(\mathbb R^n)$ with compact
support in $\mathbb R^n$.
\end{theorem}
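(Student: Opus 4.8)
The plan is to separate the statement into three parts: the identity
\[
\kappa_{g,n}=\inf_{f\in C^1_0(\mathbb R^n)}\frac{\big(\int_{\mathbb R^n}|\nabla f|\,dv_{g,n}\big)^{n/(n-1)}}{\int_{\mathbb R^n}|f|^{n/(n-1)}\,dv_{g,n}},
\]
the finiteness $\kappa_{g,n}<\infty$, and the positivity $\kappa_{g,n}>0$; only the last will require real work. Throughout I would write $V=(-\Delta)^{n/2}u$, so that \eqref{eq11} says $V\in H^1(\mathbb R^n)$ and \eqref{eq11c} exhibits $u-c$ as the (convergent) logarithmic potential $(2^{n-1}\pi^{n/2}\Gamma(n/2))^{-1}\int_{\mathbb R^n}\log\frac{|y|}{|x-y|}\,V(y)\,d\mathcal H^n(y)$. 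Finiteness is immediate: $u\in C^\infty(\mathbb R^n)$ is bounded on $\overline{B_1(0)}$, so $v_{g,n}(B_1(0))\in(0,\infty)$ and $s_{g,n}(\partial B_1(0))\in(0,\infty)$, whence $\kappa_{g,n}\le s_{g,n}(\partial B_1(0))^{n/(n-1)}/v_{g,n}(B_1(0))<\infty$.

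For the identity I would run the weighted Federer--Fleming/Maz'ya scheme in the metric $g$. The inequality ``$\le$'' comes from testing the right-hand functional with $C^1_0$-mollifications $f_\varepsilon$ of $\chi_\Omega$, $\Omega\in BDC(\mathbb R^n)$: because $\partial\Omega$ is $C^1$ and $u$ is continuous, $\int_{\mathbb R^n}|\nabla f_\varepsilon|\,dv_{g,n}\to s_{g,n}(\partial\Omega)$ and $\int_{\mathbb R^n}|f_\varepsilon|^{n/(n-1)}\,dv_{g,n}\to v_{g,n}(\Omega)$ as $\varepsilon\downarrow 0$. For ``$\ge$'', let $0\le f\in C^1(\mathbb R^n)$ have compact support; the coarea formula gives $\int_{\mathbb R^n}|\nabla f|\,dv_{g,n}=\int_0^\infty s_{g,n}(\partial\{f>t\})\,dt$, Sard's theorem makes $\{f>t\}\in BDC(\mathbb R^n)$ for a.e.\ $t$, and therefore $\int_{\mathbb R^n}|\nabla f|\,dv_{g,n}\ge\kappa_{g,n}^{(n-1)/n}\int_0^\infty\mu(t)^{(n-1)/n}\,dt$ with $\mu(t):=v_{g,n}(\{f>t\})$ nonincreasing. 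Coupling this with the layer-cake identity $\int_{\mathbb R^n}|f|^{n/(n-1)}\,dv_{g,n}=\tfrac{n}{n-1}\int_0^\infty t^{1/(n-1)}\mu(t)\,dt$ and the elementary one-dimensional inequality $\tfrac{n}{n-1}\int_0^\infty t^{1/(n-1)}\mu(t)\,dt\le\big(\int_0^\infty\mu(t)^{(n-1)/n}\,dt\big)^{n/(n-1)}$ (true for nonincreasing $\mu\ge0$, with equality when $\mu$ is the indicator of an interval) produces $\big(\int_{\mathbb R^n}|\nabla f|\,dv_{g,n}\big)^{n/(n-1)}\ge\kappa_{g,n}\int_{\mathbb R^n}|f|^{n/(n-1)}\,dv_{g,n}$, which is ``$\ge$''.

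The real content is the positivity, and I claim it reduces to a single fact: $u\in L^\infty(\mathbb R^n)$. Indeed, once $\mathrm{osc}_{\mathbb R^n}u:=\sup u-\inf u<\infty$, for every $\Omega\in BDC(\mathbb R^n)$ the Euclidean isoperimetric inequality $\mathcal H^n(\Omega)\le(n\omega_n^{1/n})^{-n/(n-1)}\mathcal H^{n-1}(\partial\Omega)^{n/(n-1)}$, combined with $v_{g,n}(\Omega)\le e^{n\sup u}\mathcal H^n(\Omega)$ and $\mathcal H^{n-1}(\partial\Omega)\le e^{-(n-1)\inf u}s_{g,n}(\partial\Omega)$, gives $\kappa_{g,n}\ge(n\omega_n^{1/n})^{n/(n-1)}e^{-n\,\mathrm{osc}_{\mathbb R^n}u}>0$. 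To prove $u\in L^\infty$ I would first note that every element of $H^1(\mathbb R^n)$ has vanishing integral, so $\int_{\mathbb R^n}V\,d\mathcal H^n=0$ — the total $Q$-curvature of $g$ vanishes — which is exactly what keeps $u$ from developing logarithmic growth at infinity. Then, fixing an atomic decomposition $V=\sum_j\lambda_j a_j$ with $\sum_j|\lambda_j|\lesssim\|V\|_{H^1}$ and each $a_j$ supported in $B(z_j,r_j)$ with $\int a_j\,d\mathcal H^n=0$, $\|a_j\|_\infty\le(\omega_n r_j^n)^{-1}$, I would estimate the atomic contribution $\int_{\mathbb R^n}\log\frac{|y|}{|x-y|}a_j(y)\,d\mathcal H^n(y)$ by subtracting its value at $y=z_j$ and using the two cancellations $\int a_j=0$ and $\int V=0$: the regime $|x-z_j|\ge 2r_j$ contributes $\lesssim r_j/|x-z_j|$, while the regime $|x-z_j|<2r_j$ is tamed by the local integrability of $\log\tfrac1{|\cdot|}$. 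Summing over $j$ should then yield $\|u-c\|_{L^\infty(\mathbb R^n)}\lesssim\|V\|_{H^1}$, a Wente/Fefferman--Stein-type bound that is precisely the $BMO$ — here $L^\infty$ — half of $(H^1,BMO)$-normality. I expect this final step to be the main obstacle: the summation over atoms must be made uniform across all scales and positions, so that both the near-diagonal logarithmic singularity and the behaviour at $|x|=\infty$ are controlled simultaneously, and this is exactly where the hypotheses $(-\Delta)^{n/2}u\in H^1(\mathbb R^n)$ (whence $\int(-\Delta)^{n/2}u=0$) and $u\in C^\infty(\mathbb R^n)$ enter; even the weaker conclusion that $e^{nu}$ is a doubling $A_\infty$ weight would suffice, through the weighted isoperimetric inequality, to give $\kappa_{g,n}>0$. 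With $u\in L^\infty(\mathbb R^n)$ in hand, the lower bound above closes the argument.
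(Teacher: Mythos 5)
Your decomposition of the theorem into the Federer--Fleming identity, the finiteness, and the positivity is the same as the paper's, and your coarea/layer-cake argument for the identity matches the paper's essentially verbatim. The genuine divergence is in how you prove $\kappa_{g,n}>0$, and your route is both shorter and gives a stronger conclusion than the one in the paper. The paper first uses Stein--Weiss--Krantz boundedness $I_1:H^1\to L^{n/(n-1)}$ to write $u-c=I_{n-1}\big((-\Delta)^{(n-1)/2}u\big)$ with $(-\Delta)^{(n-1)/2}u\in L^{n/(n-1)}$ (this is where the harmonic-function/mean-value step is spent), then invokes Bonk--Heinonen--Saksman (Lemma~\ref{l22}) to conclude $e^{nu}$ is a \emph{strong} $A_\infty$-weight, and finally David--Semmes (Lemma~\ref{l21}) for the weighted isoperimetric inequality. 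You instead go straight for $u\in L^\infty$ from the representation (\ref{eq11c}): since $\|\log\tfrac{|\cdot|}{|x-\cdot|}\|_{BMO}\le 2\,\|\log|\cdot|\|_{BMO}$ uniformly in $x$ (translation invariance of $BMO$), the Fefferman $H^1$--$BMO$ duality gives $\|u-c\|_{L^\infty}\lesssim\|(-\Delta)^{n/2}u\|_{H^1}$, which is the Wente/Coifman--Lions--Meyer--Semmes phenomenon in this setting; once $u$ is bounded, the classical Euclidean isoperimetric inequality sandwiched between $e^{n\inf u}$ and $e^{n\sup u}$ yields the explicit bound $\kappa_{g,n}\ge(n\omega_n^{1/n})^{n/(n-1)}e^{-n\,\mathrm{osc}\,u}$. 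This bypasses the $I_\alpha$-potential machinery entirely, produces a quantitative constant (the paper's proof yields only positivity), and uses strictly weaker black boxes; what it buys in exchange is that it would not apply to the purely normal (non-$H^1$) metrics for which the paper's strong-$A_\infty$ framework is designed.

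One comment on the step you flag as ``the main obstacle'': you do not actually need an atomic decomposition or any uniformization over scales. The bound $\bigl|\int_{\mathbb R^n}V\,g\,d\mathcal{H}^n\bigr|\lesssim\|V\|_{H^1}\|g\|_{BMO}$, applied with $g(y)=\log\tfrac{|y|}{|x-y|}$, gives $u\in L^\infty$ in one stroke; the uniformity in $x$ comes entirely from translation invariance of the $BMO$ seminorm, not from any case analysis near and far from the atoms. The only detail worth recording is that the integral in (\ref{eq11c}) --- which converges absolutely because $V=(-\Delta)^{n/2}u$ is smooth and integrable, $\log\tfrac{|y|}{|x-y|}$ is locally integrable, and $\log\tfrac{|y|}{|x-y|}=O(|y|^{-1})$ as $|y|\to\infty$ --- coincides with the duality pairing; this is where $\int_{\mathbb R^n}V\,d\mathcal H^n=0$ is used, so that the pairing is unambiguous modulo additive constants on the $BMO$ side. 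With that remark supplied, your argument is complete and correct.
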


Perhaps it is worth pointing out that the notion of
$(H^1,BMO)$-normal is naturally inspired by both (\ref{eq11}) which
amounts to the following Q-curvature constraint:
$$
\int_{\mathbb R^n}\Big(|Q_{g,n}|+e^{-nu}\big|\nabla\big(
e^{(n-1)u}Q_{g,n-1}\big)\big|\Big)\,dv_{g,n}<\infty
$$
and the famous C. Fefferman's duality $[H^1(\mathbb
R^n)]^\ast=BMO(\mathbb R^n)$, John-Nirenberg's space of functions
with bounded mean oscillation in $\mathbb R^n$ (cf. \cite{Fe}),
which contains the function $\log|\cdot|/|x-\cdot|$ for any fixed
$x\in\mathbb R^n$. Here it is also worth mentioning that the
conditions
$$
\int_{\mathbb R^n}|Q_{g,n}|\,dv_{g,n}<\infty\quad\hbox{and}\quad
(\ref{eq11c})
$$
produce the definition for a conformal metric to be (classical)
normal -- see also \cite{Fin} for $n=2$; \cite[Definition
3.1]{ChQiYa} \& \cite[Definition 1.7]{ChQiYa1} for $n=4$; \cite{Fa}
\& \cite{BoHeSa1} for even integer $n\ge 4$; \cite{NdXi} \&
\cite{Xu} for any integer $n\ge 3$. Obviously, the
$(H^1,BMO)$-normal is stronger than the normal. From \cite{Hu0},
\cite{NdXi} and \cite{Xu} it turns out that any conformal metric $g$
on $\mathbb R^n$ with $n\ge 2$ satisfying
\begin{equation}\label{eq1111}
\int_{\mathbb R^n}|Q_{g,n}|\,dv_{g,n}<\infty\quad\hbox{and}\quad
\lim_{|x|\to\infty}\inf_{|y|>|x|}S_{g,n}(y)\ge 0
\end{equation}
is normal.

As a first application of Theorem \ref{t2}, we obtain the following
result (cf. (\ref{eq0b})) which seems most closely tied to Chang's
question above.

\begin{theorem}\label{t2a}  For $u\in C^\infty(\mathbb R^n)$ suppose $g=e^{2u}g_0$ is a complete conformal metric on $\mathbb R^n$, $n\ge 3$, with
\begin{equation}\label{eq111}
(-\Delta)^{n/2}u\in H^1(\mathbb R^n)\quad\hbox{and}\quad S_{g,n}\ge
0.
\end{equation}
Then
\begin{equation}\label{eq44}
0<\frac{\kappa_{g,n}}{(n\omega_n^\frac{1}{n})^\frac{n}{n-1}}\le
1-\frac{\int_{\mathbb
R^n}Q_{g,n}\,dv_{g,n}}{2^{n-1}\Gamma(n/2)\pi^{n/2}}=1,
\end{equation}
where
$$
\omega_n=\mathcal{H}^n\big(B_1(0)\big)={2\pi^{n/2}}\big(n\Gamma(n/2)\big)^{-1}
$$
is the $n$-dimensional Hausdroff measure of the unit ball $B_1(0)$
of $\mathbb R^n$. Moreover, the relation ``$\le$" in (\ref{eq44})
becomes the relation ``$=$" if and only if $g=g_0$.
\end{theorem}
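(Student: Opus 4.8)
The plan is to combine Theorem \ref{t2} with two further inputs: the vanishing-mean property of the Hardy space $H^1(\mathbb R^n)$, which evaluates $\int_{\mathbb R^n}Q_{g,n}\,dv_{g,n}$, and a second-order expansion of the $g$-volume and $g$-area of small Euclidean balls, which simultaneously caps $\kappa_{g,n}$ at $(n\omega_n^{1/n})^{n/(n-1)}$ and, through the sign of $S_{g,n}$, detects the equality case. First I would check that Theorem \ref{t2} is available. Since $H^1(\mathbb R^n)\subset L^1(\mathbb R^n)$, the first part of (\ref{eq111}) gives $\int_{\mathbb R^n}|Q_{g,n}|\,dv_{g,n}=\int_{\mathbb R^n}|(-\Delta)^{n/2}u|\,d\mathcal H^n<\infty$, while $S_{g,n}\ge0$ trivially yields $\liminf_{|x|\to\infty}\inf_{|y|>|x|}S_{g,n}(y)\ge0$; hence (\ref{eq1111}) holds, $g$ is normal, and (\ref{eq11c}) is satisfied. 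As (\ref{eq11}) is the first half of (\ref{eq111}), Theorem \ref{t2} now applies and yields $0<\kappa_{g,n}<\infty$ together with the variational identity (\ref{eq33}) — this is the strict positivity in (\ref{eq44}), the completeness hypothesis serving to place $g$ honestly in the normal class just used.

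Second, I would show $\int_{\mathbb R^n}Q_{g,n}\,dv_{g,n}=0$. Set $f=(-\Delta)^{n/2}u\in H^1(\mathbb R^n)$, so each Riesz transform $R_jf$ lies in $L^1(\mathbb R^n)$ and $\widehat{R_jf}(\xi)=-i\,\xi_j|\xi|^{-1}\widehat f(\xi)$ is continuous at $\xi=0$. As $\widehat f$ is continuous but $\xi\mapsto\xi_j/|\xi|$ has no limit at the origin, this is consistent only if $\widehat f(0)=\int_{\mathbb R^n}f\,d\mathcal H^n=0$. Since $Q_{g,n}\,dv_{g,n}=e^{-nu}(-\Delta)^{n/2}u\cdot e^{nu}\,d\mathcal H^n=(-\Delta)^{n/2}u\,d\mathcal H^n$, we conclude $\int_{\mathbb R^n}Q_{g,n}\,dv_{g,n}=0$, which is precisely the rightmost equality in (\ref{eq44}).

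Third, fix $x_0\in\mathbb R^n$ and Taylor-expand $u$ about $x_0$. Integrating over $B_r(x_0)$ and over $\partial B_r(x_0)$ kills the odd-order terms and produces, as $r\to0^+$, expansions of $v_{g,n}(B_r(x_0))$ and of $s_{g,n}(\partial B_r(x_0))$ whose leading terms are the Euclidean values scaled by $e^{nu(x_0)}$ and $e^{(n-1)u(x_0)}$ and whose first corrections involve only $\Delta u(x_0)$ and $|\nabla u(x_0)|^2$. Forming the quotient and using $\Delta u+\tfrac{n-2}{2}|\nabla u|^2=-e^{2u}S_{g,n}/(2(n-1))$, the corrections collapse to a single curvature term:
\[
\frac{s_{g,n}\bigl(\partial B_r(x_0)\bigr)^{n/(n-1)}}{v_{g,n}\bigl(B_r(x_0)\bigr)}=\bigl(n\omega_n^{1/n}\bigr)^{n/(n-1)}\Bigl(1-\frac{e^{2u(x_0)}S_{g,n}(x_0)}{2(n-1)(n+2)}\,r^2+o(r^2)\Bigr).
\]
Since every $B_r(x_0)$ is an admissible domain, letting $r\to0^+$ gives $\kappa_{g,n}\le(n\omega_n^{1/n})^{n/(n-1)}$, which with the second step is the middle inequality of (\ref{eq44}). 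If $S_{g,n}(x_0)>0$ at some point, the quotient above drops strictly below $(n\omega_n^{1/n})^{n/(n-1)}$ for small $r$, forcing $\kappa_{g,n}<(n\omega_n^{1/n})^{n/(n-1)}$; hence equality in (\ref{eq44}) together with $S_{g,n}\ge0$ forces $S_{g,n}\equiv0$, whence $e^{(n-2)u/2}$ is a positive harmonic function on $\mathbb R^n$, so constant by Liouville, so $u\equiv\mathrm{const}$ and $g=g_0$. Conversely, the classical Euclidean isoperimetric inequality, with balls as extremals, gives $\kappa_{g_0,n}=(n\omega_n^{1/n})^{n/(n-1)}$.

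The step I expect to be most delicate is the $r^2$-coefficient in the quotient above: the four a priori independent contributions of $\Delta u(x_0)$ and $|\nabla u(x_0)|^2$ coming from the volume and area expansions must cancel down to a multiple of $\Delta u+\tfrac{n-2}{2}|\nabla u|^2$, i.e. of $S_{g,n}$, and it is exactly this cancellation that both bounds $\kappa_{g,n}$ by the Euclidean constant and, given $S_{g,n}\ge0$, pins the equality case to $g=g_0$. A subsidiary but still necessary point is the careful invocation of the cited normality criterion to secure (\ref{eq11c}) before Theorem \ref{t2} can legitimately be used.
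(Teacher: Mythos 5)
Your first two steps essentially reproduce the paper's: you invoke the normality criterion and Theorem~\ref{t2} for positivity, and you use the vanishing moment of $H^1$ to evaluate $\int_{\mathbb R^n}Q_{g,n}\,dv_{g,n}=0$. The paper states the latter as a known fact; your Riesz-transform derivation is a fine way to see it.

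Your third step is a genuinely different route to the middle inequality. The paper invokes Lemma~\ref{l23}, the global $r\to\infty$ isoperimetric-deficit formula taken from \cite{NdXi} and \cite{Xu}, which expresses $1-\int Q_{g,n}\,dv_{g,n}/(2^{n-1}\Gamma(n/2)\pi^{n/2})$ as $\lim_{r\to\infty}s_{g,n}(\partial B_r(0))^{n/(n-1)}/((n\omega_n^{1/n})^{n/(n-1)}v_{g,n}(B_r(0)))$, and then bounds $\kappa_{g,n}$ by that limit. You instead Taylor-expand the isoperimetric quotient of Euclidean balls at a fixed center as $r\to0^+$, and you correctly identify the delicate point: the $\Delta u(x_0)$ and $|\nabla u(x_0)|^2$ contributions from $s_{g,n}^{n/(n-1)}$ and $v_{g,n}$ do combine into a single multiple of $\Delta u+\frac{n-2}{2}|\nabla u|^2$, i.e.\ of $S_{g,n}$, with the stated coefficient. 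This is a self-contained, local argument that bypasses the deficit formula entirely and reads more cleanly, so it is a legitimate alternative. (Note, though, that the $r\to0^+$ limit of the quotient equals $(n\omega_n^{1/n})^{n/(n-1)}$ for any smooth $u$, irrespective of the sign of $S_{g,n}$; the hypothesis $S_{g,n}\ge0$ is only used in the equality case.)

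Your fourth step has a gap you should not slide over. From $S_{g,n}\equiv0$ you correctly deduce that $e^{(n-2)u/2}$ is a positive harmonic function on $\mathbb R^n$, hence constant by Liouville, so $u\equiv c$. But this yields $g=e^{2c}g_0$, \emph{not} $g=g_0$, and nothing in your argument forces $c=0$. The paper attacks this point quite differently: from $\kappa_{g,n}=(n\omega_n^{1/n})^{n/(n-1)}$ it derives the differential inequality $n\omega_n^{1/n}\le v_{g,n}(B_r(x))^{1/n-1}\,dv_{g,n}(B_r(x))/dr$, integrates to obtain $\omega_n r^n\le v_{g,n}(B_r(x))$, and combines this with a small-$r$ volume expansion and Lebesgue differentiation to conclude $e^{nu(x)}=1$ pointwise. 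That is precisely the step your proposal lacks: some independent argument that pins down the additive constant in $u$. Since $\kappa_{g,n}$ is manifestly invariant under $u\mapsto u+c$ (it rescales $s_{g,n}^{n/(n-1)}$ and $v_{g,n}$ by the same factor $e^{nc}$), the equality case cannot be resolved purely from the isoperimetric identity and $S_{g,n}\equiv0$; you must bring in something extra, as the paper does.
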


As a second application of Theorem \ref{t2}, we gain the optimal
upper bound of $\kappa_{g,n}$ through a comparison between two
integrals of the Green function associated with the $n$-Laplacian
operator.

\begin{theorem}\label{t3} For $u\in C^\infty(\mathbb R^n)$ let $g=e^{2u}g_0$ be an $(H^1,BMO)$-normal conformal metric on $\mathbb R^n$, $n\ge 3$. Suppose
$BRD(\mathbb R^n)$ stands for the class of all bounded regular
domains $\Omega\subset\mathbb R^n$. Then

\item{\rm(i)}

\begin{equation}\label{eq55}
0<\frac{\kappa_{g,n}^{-q}\Gamma(q+1)}{\kappa_{g,n}^{-p}\Gamma(p+1)}\le\inf_{x\in\Omega\in
BRD(\mathbb
R^n)}\frac{\int_{\Omega}\big(G_{\Omega,n}(x,y)\big)^q\,dv_{g,n}(y)}{\int_{\Omega}\big(G_{\Omega,n}(x,y)\big)^p\,dv_{g,n}(y)}<\infty
\end{equation}
holds for $0\le q<p<\infty$. Moreover, the equality in (\ref{eq55})
is valid for $g=g_0$.

\item{\rm(ii)}

\begin{equation}\label{eq55a}
0<\frac{\kappa_{g,n}^{p+1}}{\Gamma(p+1)}\le\inf_{x\in\Omega\in
BRD(\mathbb
R^n)}\frac{\big(s_{g,n}(\partial\Omega)\big)^\frac{n}{n-1}}{\int_{\Omega}\big(G_{\Omega,n}(x,y)\big)^p\,dv_{g,n}(y)}<\infty
\end{equation}
holds for $0\le p<\infty$. Moreover, the equality in (\ref{eq55a})
holds for $g=g_0$.
\end{theorem}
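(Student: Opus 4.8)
The plan is to reduce both parts of Theorem~\ref{t3} to a single exponential decay estimate for the $g$-volume distribution function of the $n$-Green function, and then to read off the moment inequalities by a layer-cake computation. Fix $\Omega\in BRD(\mathbb R^n)$ and $x\in\Omega$, write $G=G_{\Omega,n}(x,\cdot)$, and put $\Omega_t=\{y\in\Omega:G(y)>t\}$, $\mu(t)=v_{g,n}(\Omega_t)$ for $t>0$. The structural point that makes the Euclidean object $G$ interact with the metric $g$ is that $f\mapsto\int|\nabla f|^n\,d\mathcal H^n$ is conformally invariant in dimension $n$; hence $G$ is simultaneously the $n$-Green function of $\Omega$ for $g_0$ and for $g$, and in particular the flux normalization coming from $\Delta_nG=\delta_x$ reads $\int_{\{G=t\}}|\nabla_g G|_g^{\,n-1}\,ds_{g,n}=1$ for a.e. $t>0$. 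Since $g$ is $(H^1,BMO)$-normal, Theorem~\ref{t2} gives $0<\kappa_{g,n}<\infty$; and since $u\in C^\infty(\mathbb R^n)$ with $\Omega$ bounded, $\mu(0^+)\le v_{g,n}(\Omega)<\infty$.

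The core step is the differential inequality $\mu'(t)\le-\kappa_{g,n}\mu(t)$ for a.e. $t>0$, which I would establish working entirely in the metric $g$. The co-area formula gives $-\mu'(t)=\int_{\{G=t\}}|\nabla_gG|_g^{-1}\,ds_{g,n}$; H\"older's inequality on $\{G=t\}$ with exponents $n$ and $\tfrac{n}{n-1}$, together with the flux identity, yields $\big(s_{g,n}(\partial\Omega_t)\big)^{n/(n-1)}\le-\mu'(t)$; and feeding $\Omega_t$ into the isoperimetric inequality $\kappa_{g,n}\,v_{g,n}(\Omega_t)\le\big(s_{g,n}(\partial\Omega_t)\big)^{n/(n-1)}$ gives $\kappa_{g,n}\mu(t)\le-\mu'(t)$. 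Consequently $t\mapsto e^{\kappa_{g,n}t}\mu(t)$ is non-increasing and $\mu(t)\le v_{g,n}(\Omega)e^{-\kappa_{g,n}t}$. The technical points underlying this step — that $G\in C^{1,\alpha}_{\mathrm{loc}}(\Omega\setminus\{x\})$ with $|\{\nabla G=0\}|=0$, that $\mu$ is locally absolutely continuous on $(0,\infty)$, that the flux and co-area identities hold for a.e. $t$ (obtained by using Lipschitz truncations $\varphi_\varepsilon(G)$ as test functions rather than invoking Sard's theorem), and that the isoperimetric inequality of Theorem~\ref{t2} extends from $C^1$-domains to the finite-perimeter sets $\Omega_t$ by approximation — are routine but are where I expect the real work to lie; they all belong to the Talenti--Bandle symmetrization circle of ideas.

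Granting $\mu(t)\le v_{g,n}(\Omega)e^{-\kappa_{g,n}t}$, the layer-cake formula gives, for every $p\ge0$,
\[
\int_\Omega G^p\,dv_{g,n}=\int_0^\infty p\,t^{p-1}\mu(t)\,dt\le v_{g,n}(\Omega)\int_0^\infty p\,t^{p-1}e^{-\kappa_{g,n}t}\,dt=v_{g,n}(\Omega)\,\kappa_{g,n}^{-p}\,\Gamma(p+1)<\infty,
\]
with the left side read as $v_{g,n}(\Omega)$ when $p=0$. This is precisely the $q=0$ instance of (i); combining it with $v_{g,n}(\Omega)\le\kappa_{g,n}^{-1}\big(s_{g,n}(\partial\Omega)\big)^{n/(n-1)}$ yields (ii). In both items the quotients are manifestly positive, and the two infima are finite from above because one may test with a single pair, say $\Omega=B_1(0)$ and $x=0$; so only the lower bounds are at issue.

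For the full range $0<q<p$ of (i) I would use the sharper fact that $h(t):=e^{\kappa_{g,n}t}\mu(t)$ is itself non-increasing. Writing $h(t)=\int_{(0,\infty]}\mathbf 1_{\{r>t\}}\,d\lambda(r)$ for a finite nonnegative measure $\lambda$ on $(0,\infty]$ and applying Fubini to $\int_\Omega G^p\,dv_{g,n}=\int_0^\infty p\,t^{p-1}e^{-\kappa_{g,n}t}h(t)\,dt$, the quotient $\int_\Omega G^p\,dv_{g,n}/\int_\Omega G^q\,dv_{g,n}$ becomes a $\lambda$-weighted mean of the ratios $\gamma_p(r)/\gamma_q(r)$, where $\gamma_s(r)=\int_0^r s\,t^{s-1}e^{-\kappa_{g,n}t}\,dt$. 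Because $t^{p-q}$ is increasing, the elementary bound $\gamma_p(r)\le\tfrac{p}{q}r^{p-q}\gamma_q(r)$ shows that $r\mapsto\gamma_p(r)/\gamma_q(r)$ is non-decreasing, so the quotient does not exceed $\lim_{r\to\infty}\gamma_p(r)/\gamma_q(r)=\kappa_{g,n}^{q-p}\Gamma(p+1)/\Gamma(q+1)$, which upon taking reciprocals is exactly the lower bound in (\ref{eq55}). Finally, for $g=g_0$ the identity $G_{B_R(0),n}(0,\cdot)=(n\omega_n)^{-1/(n-1)}\log(R/|\cdot|)$ makes each $\Omega_t$ a concentric ball, so the isoperimetric and the level-set H\"older inequalities become equalities and $\mu(t)=v_{g_0,n}(B_R(0))\,e^{-\kappa_{g_0,n}t}$ with $\kappa_{g_0,n}=(n\omega_n^{1/n})^{n/(n-1)}$; then $\gamma_p/\gamma_q$ is used only in the limit $r\to\infty$, and the quotients in (\ref{eq55}) and (\ref{eq55a}) attain their lower bounds, so the two infima equal the displayed left-hand sides.
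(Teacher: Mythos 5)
Your proposal is correct and its backbone — the differential inequality $-\mu'(t)\ge\kappa_{g,n}\mu(t)$ obtained from the co-area formula, H\"older's inequality on level sets, the flux normalization $\int_{\{G=t\}}|\nabla G|^{n-1}\,d\mathcal{H}^{n-1}=1$ (Lemma~\ref{l31}), and the isoperimetric bound of Theorem~\ref{t2}, leading to the decrease of $t\mapsto e^{\kappa_{g,n}t}\mu(t)$ — coincides with the paper's derivation of $d\big(e^{\kappa_{g,n}t}F(t,y)\big)/dt\le0$. Your treatment of the $q=0$ case, of part (ii) as the composite $v_{g,n}(\Omega)\le\kappa_{g,n}^{-1}\big(s_{g,n}(\partial\Omega)\big)^{n/(n-1)}$, and of the equality case $g=g_0$ via the explicit Green function of the ball (where $\mu$ is exactly exponential) also matches the paper.

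Where you genuinely diverge is in the passage from exponential decay to the general moment comparison $0\le q<p$. The paper introduces the tail moments $F_q(t,y)=\int_{\Omega(t,y;G)}G^{p}\,dv_{g,n}$, establishes a logarithmic-derivative comparison $\tfrac{d}{dt}\log F_q(t,y)\le\tfrac{d}{dt}\log\int_t^\infty r^qe^{-\kappa_{g,n}r}\,dr$, integrates it to get a tail estimate on $F_q(t,y)/F_q(0,y)$, and then recovers $F_p(0,y)=(p-q)\int_0^\infty t^{p-q-1}F_q(t,y)\,dt$ to finish. You instead write $h(t)=e^{\kappa_{g,n}t}\mu(t)$ as a superposition $h(t)=\lambda\big((t,\infty]\big)$ of indicators, observe via Fubini that $\int_\Omega G^s\,dv_{g,n}=\int\gamma_s(r)\,d\lambda(r)$ with $\gamma_s(r)=\int_0^r s\,t^{s-1}e^{-\kappa_{g,n}t}\,dt$, and conclude by the monotonicity $\gamma_p(r)/\gamma_q(r)\nearrow\kappa_{g,n}^{q-p}\Gamma(p+1)/\Gamma(q+1)$, which you correctly reduce to the elementary bound $\gamma_p(r)\le\tfrac{p}{q}r^{p-q}\gamma_q(r)$. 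Your route is self-contained and avoids the auxiliary family $F_q(t,y)$ and the log-derivative machinery; the paper's route has the side benefit of producing a pointwise tail estimate on $F_q(t,y)$ of independent interest. Two smaller, purely cosmetic departures: you work intrinsically in the metric $g$ by invoking conformal invariance of the $n$-Green function, whereas the paper stays in Euclidean coordinates (the two flux identities are the same, as you note); and you propose Lipschitz truncations $\varphi_\varepsilon(G)$ in place of the Sard-type Lemma~\ref{l30} to justify the a.e.\ co-area and flux identities, whereas the paper uses that lemma directly. Both choices are sound.
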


The proofs of Theorems \ref{t2}-\ref{t2a}-\ref{t3} are provided in
the second, third and fourth sections respectively. Our techniques
and methods are of strong harmonic analysis flavor and developed
partially on the basis of the following works: \cite{BaBrFl},
\cite{BoHeSa}, \cite{ChQiYa}, \cite{DaSe}, \cite{Fa}, \cite{Nd},
\cite{NdXi}, and \cite{Wa}. Here we would like to thank P. Li for
sending us the motive paper \cite{Li-Tam}, A. Chang and G. Zhang for
reading the original version of this article, and the referee for
giving us helpful suggestions.

\section{Proof of Theorem \ref{t2}}

To prove Theorem \ref{t2}, we begin with the concept of
David-Semmes' strong $A_\infty$-weight (cf. \cite{DaSe}).

\begin{definition}\label{d21}
\item{\rm(i)} A function $w: \mathbb R^n\to [0,\infty)$ is called an $A_\infty$-weight provided there are constants $\epsilon>0$ and $C\ge 1$ such that
$$
\left(\big(\mathcal{H}^n(B)\big)^{-1}\int_B w^{1+\epsilon}\,d\mathcal{H}^n\right)^\frac1{1+\epsilon}\le C\big(\mathcal{H}^n(B)\big)^{-1}\int_B w\,d\mathcal{H}^n
$$
holds for all Euclidean balls $B\subset\mathbb R^n$.

\item{\rm(ii)} A nonnegative Borel measure $\mu$ on $\mathbb R^n$ is called a doubling measure provided there is a constant $C\ge 1$ such that $\mu(2B)\le C \mu(B)$ holds for every Euclidean ball $B=B_r(x)\subset\mathbb R^n$ and its doubling ball $2B=B_{2r}(x)$.

\item{\rm(iii)} A doubling measure $\mu$ on $\mathbb R^n$ is called a metric doubling measure provided there are a metric $d_\mu(\cdot,\cdot)$ on $\mathbb R^n$ and a constant $C\ge 1$ such that
$$
C^{-1}d_\mu(x,y)\le\mu\big(B_{|x-y|}(x)\cup B_{|y-x|}(y)\big)\le
Cd_\mu(x,y)\quad\hbox{for}\quad x,y\in\mathbb R^n.
$$
In this case, there exists an $A_\infty$-weight $w$ on $\mathbb R^n$ such that $d\mu=wd\mathcal{H}^n$ -- such a weight is said to be a strong $A_\infty$-weight.
\end{definition}

It is well-known that if $w$ is an $A_\infty$-weight then $u=\log
w\in BMO(\mathbb R^n)$:
$$
\|u\|_{BMO}=\sup_{B}\big(\mathcal{H}^n(B)\big)^{-1}\int_{B}\Big|u-\big(\mathcal{H}^n(B)\big)^{-1}\int_B
u\,d\mathcal{H}^n\Big|\,d\mathcal{H}^n<\infty,
$$
where the supremum is taken over all Euclidean balls
$B\subset\mathbb R^n$, and conversely, if $u\in BMO(\mathbb R^n)$
then there is a constant $c>0$ depending on $n$ and $\|u\|_{BMO}$
such that $w=e^{cu}$ is an $A_\infty$-weight. Moreover, a typical
example of the strong $A_\infty$-weight is the Jacobian determinant
$J_f$ of a quasiconformal mapping $f$ of $\mathbb R^n$ onto itself
in that if $d_\mu(x,y)=|f(x)-f(y)|$ then a change of variables plus
a distortion structure of quasiconformal mappings (cf.
\cite[p.380]{HeKiMa}) gives
$$
d_\mu(x,y)\approx \left(\mathcal{H}^n\Big(f\big(B_{|x-y|}(x)\cup B_{|y-x|}(y)\big)\Big)\right)^\frac1n\approx\left(\int_{B_{|x-y|}(x)\cup B_{|y-x|}(y)}J_f\,d\mathcal{H}^n\right)^\frac1n.
$$
Here and henceafter, $X\approx Y$ means $C^{-1}Y\le X\le CY$ for a
constant $C\ge 1$ independent of $X$ and $Y$, and moreover the
symbol $X\lesssim Y$ stands for $X\le CY$.

The lemma below is a straightforward consequence of David-Semmes'
\cite[(2.4)]{DaSe}.

\begin{lemma}\label{l21} If $w$ is a strong $A_\infty$-weight, then there is a constant $C>0$ such that the isoperimetric inequality
$$
\int_{\Omega}w\,d\mathcal{H}^n\le C\left(\int_{\partial\Omega}w^\frac{n-1}{n}\,d\mathcal{H}^{n-1}\right)^\frac{n}{n-1}
$$
holds for every bounded open set $\Omega\subset\mathbb R^n$.
\end{lemma}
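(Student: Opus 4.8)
The plan is to read off the stated inequality directly from the structure theory of strong $A_\infty$-weights as developed by David and Semmes, without re-deriving the deep machinery. First I would recall that, by definition, a strong $A_\infty$-weight $w$ arises from a metric doubling measure $d\mu = w\,d\mathcal H^n$, so there is an associated metric $d_\mu$ on $\mathbb R^n$ comparable to the ``$\mu$-radius'' of the symmetric ball $B_{|x-y|}(x)\cup B_{|y-x|}(y)$, as in Definition \ref{d21}(iii). The metric space $(\mathbb R^n, d_\mu)$ together with the measure $\mu$ satisfies a doubling condition and a $(1,1)$-Poincar\'e-type inequality; these are precisely the hypotheses under which David--Semmes establish a relative isoperimetric (or ``co-area plus Sobolev'') estimate. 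Concretely, inequality (2.4) of \cite{DaSe} asserts that for a bounded open set $\Omega$, the $\mu$-measure of $\Omega$ is controlled by the $n/(n-1)$-power of the ``perimeter'' of $\Omega$ measured in the weighted surface measure $w^{(n-1)/n}\,d\mathcal H^{n-1}$; the exponent $(n-1)/n$ on $w$ is exactly the one that makes the surface integral behave like the $\mu$-analogue of $\mathcal H^{n-1}$, since on a sphere of Euclidean radius $r$ the weighted area scales like $(w r^n)^{(n-1)/n}$, matching the scaling of $\mu(B_r)^{(n-1)/n}$.

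The key steps, in order, are: (1) invoke Definition \ref{d21}(iii) to produce the metric $d_\mu$ and the comparability of $d_\mu$ with $\mu$-masses of symmetric balls; (2) verify (or cite) that $w$ being a strong $A_\infty$-weight forces the weighted Euclidean balls to satisfy both the doubling property and the chain/Poincar\'e hypotheses needed in \cite{DaSe}; (3) apply \cite[(2.4)]{DaSe} to the bounded open set $\Omega$, obtaining $\mu(\Omega)\le C\big(\text{per}_\mu(\Omega)\big)^{n/(n-1)}$; and (4) identify $\mu(\Omega) = \int_\Omega w\,d\mathcal H^n$ and $\text{per}_\mu(\Omega) = \int_{\partial\Omega} w^{(n-1)/n}\,d\mathcal H^{n-1}$ for open sets with sufficiently regular boundary, with the general bounded open case following by the usual approximation of $\Omega$ by sets with smooth boundary (or by working directly with sets of finite perimeter, since the right-hand side is infinite unless $\partial\Omega$ is $\mathcal H^{n-1}$-finite, in which case the estimate is vacuous or reduces to the regular case).

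The main obstacle is step (2)–(4): ensuring that the abstract ``perimeter in $(\mathbb R^n,d_\mu)$'' appearing in David--Semmes' framework coincides, up to the multiplicative constant we are allowed, with the concrete weighted surface integral $\int_{\partial\Omega} w^{(n-1)/n}\,d\mathcal H^{n-1}$. This requires knowing that the strong $A_\infty$-weight $w$ is, say, locally bounded above and below on compact sets away from its degeneracies, or more robustly, invoking the fact that for a strong $A_\infty$-weight the upper gradient structure of $d_\mu$ with respect to the Euclidean structure is governed by $w^{1/n}$; this is implicit in the David--Semmes distortion estimates but must be stated carefully. Once this dictionary is in place, the lemma is immediate, and indeed the statement is labelled ``a straightforward consequence of \cite[(2.4)]{DaSe},'' so I expect the author's proof to be a short paragraph citing that estimate and performing exactly this translation.
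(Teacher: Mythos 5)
Your proposal is correct and matches the paper's approach: the paper offers no independent argument for Lemma \ref{l21}, only the remark that it is a straightforward consequence of David--Semmes' inequality (2.4), which is exactly the citation you make. Your extra discussion about translating an abstract metric-space perimeter into the weighted surface integral is unnecessary caution---David--Semmes state (2.4) directly in the weighted Euclidean form appearing in the lemma---but it does not introduce any error.
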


From Bonk-Heinonen-Saksman's \cite[Theorem 3.1 \& Remark
3.26]{BoHeSa} we can readily obtain the following result.

\begin{lemma}\label{l22} Given $\alpha\in (0,n)$ and $x\in\mathbb
R^n$ let
$$
u(x)=(I_\alpha
f)(x)=\frac{\Gamma\big(\frac{n-\alpha}{2}\big)}{2^\alpha\pi^{\frac{n}{2}}\Gamma\big(\frac{\alpha}{2}\big)}\int_{\mathbb
R^n}\frac{f(y)}{|x-y|^{n-\alpha}}\,d\mathcal{H}^n(y)
$$
converge for some function $f:\mathbb R^n\to\mathbb R^1$ with
$$
\|f\|_{L^{n/\alpha}}=\left(\int_{\mathbb
R^n}|f|^{n/\alpha}\,d\mathcal{H}^n\right)^{\alpha/n}<\infty.
$$
Then $w=e^{nu}$ is a strong $A_\infty$-weight.
\end{lemma}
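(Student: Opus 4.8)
The plan is to connect the Riesz potential $u = I_\alpha f$ with the framework of metric doubling measures from Definition \ref{d21}(iii), so that the strong $A_\infty$-weight conclusion for $w = e^{nu}$ follows from Bonk--Heinonen--Saksman's criterion. First I would record the pointwise identity that for $\alpha \in (0,n)$ the kernel $|x-y|^{\alpha-n}$ is (up to the stated normalizing constant) the fundamental solution of $(-\Delta)^{\alpha/2}$, so that $u = I_\alpha f$ means precisely $(-\Delta)^{\alpha/2} u = f$ in the distributional sense. Thus the hypothesis $\|f\|_{L^{n/\alpha}} < \infty$ says that the ``fractional Laplacian of order $\alpha$'' of $u$ lies in the borderline Lebesgue space $L^{n/\alpha}(\mathbb R^n)$; this is exactly the scaling-critical integrability that forces $u \in BMO(\mathbb R^n)$ (by the Adams--Sobolev embedding / Trudinger-type estimate for Riesz potentials at the critical exponent) and, more importantly, that makes $e^{nu}\,d\mathcal H^n$ a metric doubling measure.

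The core step is to invoke the cited result \cite[Theorem 3.1 \& Remark 3.26]{BoHeSa}, which characterizes when $e^{nu}\,d\mathcal H^n$ is a metric doubling measure (equivalently, when $e^{nu}$ is a strong $A_\infty$-weight) in terms of a quadratic-estimate / square-function condition on $u$, of the form
\begin{equation}\label{eq-lm22}
\sup_{B}\Big(\mathcal H^n(B)\Big)^{-1}\int_B\int_{0}^{\operatorname{diam}B}\big|\,\text{(mean oscillation of }u\text{ at scale }t)\,\big|^2\,\frac{dt}{t}\,d\mathcal H^n < \infty,
\end{equation}
with the supremum over Euclidean balls $B$. I would verify \eqref{eq-lm22} for $u = I_\alpha f$ by the standard Littlewood--Paley / Calder\'on reproducing-formula computation: differentiating the Riesz potential and using that the convolution kernels $t\,\partial_t (\varphi_t * |\cdot|^{\alpha-n})$ form an approximate-identity-type family with $L^1$-norm $\lesssim t^{\alpha}$, one bounds the inner integrand by a fractional maximal/square function of $f$, and then the $L^{n/\alpha} \to$ (Carleson measure) mapping property — which is the dual reformulation of the $L^{n/\alpha} \to BMO$ boundedness of $I_\alpha$ combined with the $H^1$--$BMO$ duality — yields the uniform bound. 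Equivalently, and perhaps more cleanly, I would cite that the Bonk--Heinonen--Saksman condition is implied by $u$ being a Riesz potential $I_\alpha f$ with $f \in L^{n/\alpha}$, which is precisely the content of Remark 3.26 there; so the proof reduces to checking that our normalization of $I_\alpha$ agrees with theirs and that $n/\alpha$ is the correct critical exponent.

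The main obstacle is making the reduction to \cite{BoHeSa} genuinely rigorous rather than a black-box citation: one must confirm that the convergence hypothesis on the integral defining $u(x)$ (which is assumed, not proved, and can fail to be absolutely convergent without a normalization of $f$ at infinity) is compatible with the a priori setup in \cite{BoHeSa}, where $u$ is typically taken modulo constants or with a subtracted base point as in \eqref{eq11c}. I would handle this by noting that only the oscillation of $u$ enters condition \eqref{eq-lm22} and the $A_\infty$ definition, so additive constants and the precise choice of potential representative are irrelevant; the quantity $\mathcal H^n(B)^{-1}\int_B|u - \text{avg}_B u|$ depends only on $(-\Delta)^{\alpha/2}u = f$. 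Once that point is settled, the remaining work is the routine square-function estimate sketched above, and the conclusion $w = e^{nu}$ is a strong $A_\infty$-weight — hence eligible for Lemma \ref{l21} — follows immediately.
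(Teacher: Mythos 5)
The paper's proof of Lemma~\ref{l22} is exactly one sentence: a direct citation of Bonk--Heinonen--Saksman \cite[Theorem~3.1 and Remark~3.26]{BoHeSa} with no further argument, and your proposal ultimately reduces to the same citation (your ``more cleanly'' fallback), so the two approaches coincide. One inaccuracy in your gloss is worth flagging: Theorem~3.1 of \cite{BoHeSa} is not formulated as a Carleson-measure/square-function criterion on $u$; it constructs a quasiconformal self-map of $\mathbb R^n$ whose Jacobian is comparable to $e^{n I_\alpha f}$ when $\|f\|_{L^{n/\alpha}}$ is sufficiently small, and Remark~3.26 removes the smallness hypothesis for the strong $A_\infty$ conclusion. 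Since both you and the paper treat the reference as a black box, this mischaracterization does not affect the logic of the reduction, and your observation that only the oscillation of $u$ --- not its additive normalization or the choice of potential representative --- enters the strong $A_\infty$ property is a sound clarification.
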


The forthcoming technical result is also useful.

\begin{lemma}\label{l23new} Let $0<\lambda<n$. Then
$$
\sup_{(r,x,y)\in(0,\infty)\times\mathbb R^n\times\mathbb
R^n}\frac{r^\lambda}{\mathcal{H}^{n}\big(B_{r}(x)\big)}\int_{B_{r}(x)}|z-y|^{-\lambda}\,d\mathcal{H}^{n}(z)<\infty.
$$
\end{lemma}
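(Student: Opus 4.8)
The plan is to estimate the integral $\int_{B_r(x)}|z-y|^{-\lambda}\,d\mathcal{H}^n(z)$ uniformly by splitting into the cases where $y$ is close to or far from the ball $B_r(x)$, and to observe that the worst case (largest integral relative to $r^{-\lambda}\mathcal{H}^n(B_r(x))$) occurs when $y$ sits at the center of the ball. First I would fix $(r,x,y)$ and write $A=\mathcal{H}^n(B_r(x))=\omega_n r^n$, so that the quantity to bound is
\begin{equation*}
\Phi(r,x,y)=\frac{r^\lambda}{\omega_n r^n}\int_{B_r(x)}|z-y|^{-\lambda}\,d\mathcal{H}^n(z).
\end{equation*}
Since $\lambda<n$, the function $z\mapsto|z-y|^{-\lambda}$ is locally integrable, so $\Phi$ is finite for every choice of parameters; the content of the lemma is the uniform bound.

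Next I would show that for \emph{any} measurable set $E\subset\mathbb R^n$ with $\mathcal{H}^n(E)=\mathcal{H}^n(B_r(y))$ one has $\int_E|z-y|^{-\lambda}\,d\mathcal{H}^n(z)\le\int_{B_r(y)}|z-y|^{-\lambda}\,d\mathcal{H}^n(z)$; this is the standard ``bathtub'' / layer-cake rearrangement fact, since $|z-y|^{-\lambda}$ is a radially decreasing function of $z$ about $y$ and $B_r(y)$ is precisely its superlevel set of the right measure. Applying this with $E=B_r(x)$ (which has the same measure $\omega_n r^n$ as $B_r(y)$) gives
\begin{equation*}
\int_{B_r(x)}|z-y|^{-\lambda}\,d\mathcal{H}^n(z)\le\int_{B_r(y)}|z-y|^{-\lambda}\,d\mathcal{H}^n(z)=\int_0^r t^{-\lambda}\,n\omega_n t^{n-1}\,dt=\frac{n\omega_n}{n-\lambda}\,r^{n-\lambda},
\end{equation*}
using polar coordinates centered at $y$ and the hypothesis $\lambda<n$ to ensure convergence at the origin. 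Substituting this into $\Phi$ yields $\Phi(r,x,y)\le\frac{n}{n-\lambda}$, a bound independent of $(r,x,y)$, which proves the lemma.

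If one prefers to avoid invoking the rearrangement inequality as a black box, the same conclusion follows by the elementary decomposition $B_r(x)=(B_r(x)\cap B_r(y))\cup(B_r(x)\setminus B_r(y))$: on the first piece one integrates $|z-y|^{-\lambda}$ over a subset of $B_r(y)$ and bounds it by the full radial integral as above, while on the second piece $|z-y|^{-\lambda}\le r^{-\lambda}$ so its contribution is at most $r^{-\lambda}\mathcal{H}^n(B_r(x)\setminus B_r(y))\le r^{-\lambda}\omega_n r^n=\omega_n r^{n-\lambda}$; adding these gives $\Phi\le\frac{n}{n-\lambda}+1$, still uniform. The only point requiring care — and the mild ``obstacle'' — is making sure the integrability at $z=y$ is handled correctly, which is exactly where the restriction $0<\lambda<n$ is used and without which the supremum would be infinite; everything else is a one-line polar-coordinate computation.
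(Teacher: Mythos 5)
Your proof is correct. Your fallback argument (decomposing $B_r(x)$ into $B_r(x)\cap B_r(y)$ and $B_r(x)\setminus B_r(y)$, bounding the integrand by $r^{-\lambda}$ on the far piece and integrating the singularity on the near piece) is essentially the same decomposition the paper uses: the only difference is that the paper controls the near piece $\int_{B_r(x)\cap B_r(y)}|z-y|^{-\lambda}$ by summing over dyadic annuli $B_{2^{-k}r}(y)\setminus B_{2^{-k-1}r}(y)$ and obtaining a convergent geometric series $\sum_k 2^{-k(n-\lambda)}$, whereas you pass directly to polar coordinates centered at $y$ and integrate $t^{-\lambda}t^{n-1}$. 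These are two renderings of the same computation, the dyadic version being the one that generalizes more readily to doubling or non-radial settings, the polar version being slightly more direct here. Your primary route — the bathtub/rearrangement inequality reducing the worst case to $x=y$ — is a genuinely different and cleaner approach: it bypasses any decomposition, isolates the extremal configuration, and produces the sharp constant $\frac{n}{n-\lambda}$ rather than merely an implicit one; the cost is invoking the Hardy--Littlewood rearrangement fact, which is elementary but still external to the otherwise self-contained dyadic estimate the paper prefers. Either way the restriction $0<\lambda<n$ enters exactly where you say it does, to ensure convergence at $z=y$.
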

\begin{proof} Using a dyadic portion of $B_{r}(y)$ we estimate
\begin{eqnarray*}
&&\Big({\mathcal{H}^{n}\big(B_{r}(x)\big)}\Big)^{-1}\int_{B_{r}(x)}\frac{d\mathcal{H}^n(z)}{|z-y|^{\lambda}}
\\
&&\approx r^{-n}\left(\int_{B_{r}(x)\cap\big(\mathbb R^n\setminus B_r(y)\big)}\frac{d\mathcal{H}^{n}(z)}{|z-y|^\lambda}+\int_{B_{r}(x)\cap B_r(y)}\frac{d\mathcal{H}^{n}(z)}{|z-y|^\lambda}\right)\\
&&\lesssim
r^{-(n+\lambda)}\mathcal{H}^n\Big(B_{r}(x)\cap\big(\mathbb
R^n\setminus B_r(y)\big)\Big)\\
&&\quad+\ r^{-n}\sum_{k=0}^\infty\int_{B_{r}(x)\cap\big(B_{2^{-k}r}(y)\setminus B_{2^{-k-1}r}(y)\big)}\frac{d\mathcal{H}^{n}(z)}{|z-y|^\lambda}\\
&&\lesssim r^{-\lambda}\left(1+r^{-n}\sum_{k=0}^\infty
2^{k\lambda}\mathcal{H}^n\Big(B_{r}(x)\cap\big(B_{2^{-k}r}(y)\setminus
B_{2^{-k-1}r}(y)\big)\Big)\right)\\
&&\lesssim r^{-\lambda}\Big(1+\sum_{k=0}^\infty
2^{-k(n-\lambda)}\Big),
\end{eqnarray*}
whence getting the desired finiteness.
\end{proof}

\smallskip

\noindent{\bf Proof of Theorem \ref{t2}.}\ \ We first prove
$0<\kappa_{g,n}<\infty$. Using $(-\Delta)^{n/2}u\in H^1(\mathbb
R^n)$, the celebrated Stein-Weiss-Krantz's boundedness of $I_\alpha:
H^1(\mathbb R^n)\to L^{\frac{n}{n-\alpha}}(\mathbb R^n)$ (cf.
\cite{StWe} and \cite{Kr}), and $(-\Delta)^{-\frac12}=I_1$, we gain
\begin{eqnarray}\label{eq2ad}
&&\left(\int_{\mathbb
R^n}|(-\Delta)^{\frac{n-1}{2}}u|^\frac{n}{n-1}\,d\mathcal{H}^n\right)^\frac{n-1}{n}\nonumber\\
&&=\left(\int_{\mathbb
R^n}|I_1(-\Delta)^{\frac{n}{2}}u|^\frac{n}{n-1}\,d\mathcal{H}^n\right)^\frac{n-1}{n}\\
&&\lesssim\|(-\Delta)^{n/2}u\|_{H^1}\nonumber
\end{eqnarray}

Note also that for $n\ge 3$ and $x\not=y$ (cf. \cite[p.128, (2.10.1)
\& (2.10.8)]{KiSrTr} and \cite[p.132, (3)]{LieLo}),
\begin{eqnarray*}
&&(-\Delta)^\frac12\log|x-y|^{-1}\\
&&=(-\Delta)^{-\frac12}(-\Delta)\log|x-y|\\
&&=(n-2)I_1(|x-\cdot|^{-2})(y)\\
&&=\frac{(n-2)\Gamma\big(\frac{n-1}{2}\big)}{2\pi^{\frac{n}{2}}\Gamma\big(\frac{1}{2}\big)}\int_{\mathbb
R^n}{|x-z|^{-2}}{|z-y|^{1-n}}\,d\mathcal{H}^n(z)\\
&&=\left(\frac{\pi^\frac12\Gamma\big(\frac{n}{2}\big)}{\Gamma\big(\frac{n-1}{2}\big)}\right)|x-y|^{-1}.
\end{eqnarray*}
So if
\begin{equation*}\label{eq3ad}
u_1(x)=I_{n-1}\big((-\Delta)^{\frac{n-1}{2}}u\big)(x)\quad\hbox{for}\quad
x\in\mathbb R^n,
\end{equation*}
then
\begin{eqnarray*}
&&(-\Delta)^{\frac{n-1}{2}}u_1(x)\\
&&=\frac{\int_{\mathbb
R^n}\big((-\Delta)^{\frac{n-1}{2}}|x-y|^{-1}\big)(-\Delta)^{\frac{n-1}{2}}u(y)\,d\mathcal{H}^n(y)}{2^{n-1}\pi^{\frac{n-1}{2}}\Gamma\big(\frac{n-1}{2}\big)}\\
&&=\frac{\int_{\mathbb
R^n}\big((-\Delta)^{\frac{n-1}{2}}(-\Delta)^\frac12\log|x-y|^{-1}\big)(-\Delta)^{\frac{n-1}{2}}u(y)\,d\mathcal{H}^n(y)}
{2^{n-1}\pi^{\frac{n}2}\Gamma\big(\frac{n}{2}\big)}\\
&&=\int_{\mathbb
R^n}\delta_x(y)(-\Delta)^{\frac{n-1}{2}}u(y)\,d\mathcal{H}^n(y)\\
&&=(-\Delta)^{\frac{n-1}{2}}u(x)
\end{eqnarray*}
Here we have used the formula (cf. \cite[Proposition 2.1
(iv)]{NdXi}) that
$$
(-\Delta)^{n/2}(-\log|x-y|)=2^{n-1}\Gamma(n/2)\pi^{n/2}\delta_x(y)
$$
holds in the sense of distribution. Consequently,
$(-\Delta)^{\frac{n-1}{2}}(u-u_1)=0$. In other words,
$$
0=(2\pi|x|)^{n-1}\widehat{(u-u_1)}(x),\quad x\in\mathbb R^n.
$$
Since $n\ge 3$, this last equation forces $(-\Delta)(u-u_1)=0$,
namely, $u-u_1$ is a harmonic function on $\mathbb R^n$ and so is
each coordinate of the vector $\nabla(u-u_1)$.

A combined application of (\ref{eq11c}), the mean-value property of
$\partial(u-u_1)(y)/\partial y_j$, Fubini's theorem and Lemma
\ref{l23new} derives that for any $r>0$ and $x\in\mathbb R^n$,
\begin{eqnarray*}
&&\Big|\frac{\partial(u-u_1)}{\partial
y_j}(x)\Big|\\
&&=\left|\Big(\mathcal{H}^{n}\big(B_r(x)\big)\Big)^{-1}\int_{B_r(x)}\frac{\partial(u-u_1)}{\partial
y_j}(y)\,d\mathcal{H}^{n}(y)\right|\\
&&\lesssim\int_{\mathbb
R^n}\left(r^{-n}\int_{B_r(x)}\Big|\frac{\partial}{\partial
y_j}\log\frac{|z|}{|z-y|}\Big|\,
d\mathcal{H}^{n}(y)\right)|(-\Delta)^{n/2}u(z)|\,d\mathcal{H}^n(z)\\
&&\quad +\ r^{-n}\int_{B_r(x)}\left|\int_{\mathbb
R^n}\frac{\partial}{\partial
y_j}\Big(|z|^{-1}(-\Delta)^\frac{n-1}{2}u(y-z)\Big)\,d\mathcal{H}^n(z)\right|\,
d\mathcal{H}^{n}(y)\\
&&\lesssim\int_{\mathbb R^n}\left(r^{-n}\int_{B_r(x)}|z-y|^{-1}\,d\mathcal{H}^{n}(y)\right)|(-\Delta)^{n/2}u(z)|\,d\mathcal{H}^n(z)\\
&&\quad+\int_{\mathbb R^n}\left(r^{-n}\int_{B_r(x)}|z-y|^{-1}\,d\mathcal{H}^{n}(y)\right)|\nabla\big((-\Delta)^{(n-1)/2}u\big)(z)|\,d\mathcal{H}^n(z)\\
&&\lesssim
r^{-1}\Big(\|(-\Delta)^{n/2}u\|_{L^1}+\big\|\nabla\big((-\Delta)^{(n-1)/2}u\big)\big\|_{L^1}\Big)\\
&&\lesssim r^{-1}\|(-\Delta)^{n/2}u\|_{H^1},
\end{eqnarray*}
where we have also used the following formula (cf. \cite[p.58,
(1.94)]{MaZi}):
$$
-R_j(f)(x)=\frac{\partial}{\partial x_j}(I_1
f)(x)=\frac{\partial}{\partial x_j}\big((-\Delta)^{-1/2}
f\big)(x),\quad j=1,2,...,n.
$$
Letting $r\to\infty$ we obtain that $\nabla(u-u_1)$ is the zero
vector, whence finding that $u-u_1$ is a constant $c$. Now we get by
Lemma \ref{l22}, (\ref{eq2ad}) and the definition of $u_1$ that
$w=e^{nu}=e^{nc}e^{nu_1}$ is a strong $A_\infty$-weight. This,
together with Lemma \ref{l21}, deduces that for any $\Omega\in
BDC(\mathbb R^n)$,
$$
\int_{\Omega}e^{nu}\,d\mathcal{H}^n\le C\left(\int_{\partial\Omega}e^{(n-1)u}\,d\mathcal{H}^{n-1}\right)^\frac{n}{n-1}
$$
where $C>0$ is a constant independent of $\Omega$. Thus
$\kappa_{g,n}$ is a finite positive number.

Next, we prove
\begin{equation}\label{eq21bb}
\kappa_{g,n}=\inf_{f\in C^1_0(\mathbb R^n)}\frac{\big(\int_{\mathbb
R^n}|\nabla f|\,dv_{g,n}\big)^{\frac{n}{n-1}}}{\int_{\mathbb
R^n}|f|^{\frac{n}{n-1}}\,dv_{g,n}}.
\end{equation}
In spite of being well-known, such an argument is included here for
the completeness of the paper. For $t\ge 0$ and $f\in C^1_0(\mathbb
R^n)$, let
$$
\Omega(t;f)=\{x\in\mathbb R^n: |f(x)|\ge t\},
$$
then
$$
\partial\Omega(t;f)=\{x\in\mathbb R^n: |f(x)|=t\}.
$$
Thus, using the layer cake representation, the monotonicity of
$s_{g,n}\big(\partial\Omega(t;f)\big)$ with respect to $t\ge 0$ and
the co-area formula for $\nabla f$ (cf. \cite[Theorem
VIII.3.3]{Chav}) we obtain
\begin{eqnarray*}
&&\kappa_{g,n}\int_{\mathbb
R^n}|f|^\frac{n}{n-1}\,dv_{g,n}\\
&&=\kappa_{g,n}\int_0^\infty
v_{g,n}\big(\Omega(t;f)\big)\,dt^\frac{n}{n-1}\\
&&\le\int_0^\infty
\Big(s_{g,n}\big(\partial\Omega(t;f)\big)\Big)^\frac{n}{n-1}\,dt^\frac{n}{n-1}\\
&&=\Big(\frac{n}{n-1}\Big)\int_0^\infty
t^\frac{1}{n-1}\Big(s_{g,n}\big(\partial\Omega(t;f)\big)\Big)^\frac{n}{n-1}\,dt\\
&&\le\int_0^\infty\frac{d}{dt}\left(\Big(\int_0^t
s_{g,n}\big(\partial\Omega(r;f)\big)\,dr\Big)^\frac{n}{n-1}\right)\,dt\\
&&=\left(\int_0^\infty
s_{g,n}\big(\partial\Omega(t;f)\big)\,dt\right)^\frac{n}{n-1}\\
&&=\left(\int_{\mathbb R^n}|\nabla f|\,
dv_{g,n}\right)^\frac{n}{n-1},
\end{eqnarray*}
whence reaching
\begin{equation}\label{eq21bb1}
\kappa_{g,n}\le\inf_{f\in C^1_0(\mathbb
R^n)}\frac{\big(\int_{\mathbb R^n}|\nabla
f|\,dv_{g,n}\big)^{\frac{n}{n-1}}}{\int_{\mathbb
R^n}|f|^{\frac{n}{n-1}}\,dv_{g,n}}.
\end{equation}
To check the reversed inequality of (\ref{eq21bb1}), as to
$\Omega\in BDC(\mathbb R^n)$ and $\epsilon>0$ we choose the
following function
\[
f_\epsilon(x)=\left\{\begin{array} {r@{\quad,\quad}l}
1 & x\in\Omega\\
1-\epsilon^{-1}\hbox{dist}_g(x,\partial\Omega) & x\in\mathbb
R^n\setminus\Omega\ \ \& \ \ \hbox{dist}_g(x,\partial\Omega)<\epsilon\\
0 & x\in\mathbb R^n\setminus\Omega\ \ \& \ \
\hbox{dist}_g(x,\partial\Omega)\ge\epsilon.
\end{array}
\right.
\]
Here $\hbox{dist}_g(x,\partial\Omega)$ is the distance from $x$ to
$\partial\Omega$ with respect to the metric $g$. When $\epsilon$ is
small enough, we have that
\[
|\nabla f_\epsilon(x)|=\left\{\begin{array} {r@{\quad,\quad}l}
\epsilon^{-1} & x\in\mathbb R^n\setminus\overline{\Omega}\ \ \&
\ \ \hbox{dist}_g(x,\partial\Omega)<\epsilon\\
0 & \hbox{otherwise},
\end{array}
\right.
\]
where $\overline{\Omega}$ is the closure of $\Omega$, but also that
$f_\epsilon$ tends to the characteristic function $1_\Omega$ of
$\Omega$ as $\epsilon\to 0$. Hence
\begin{eqnarray*}
&&\lim_{\epsilon\to 0}\frac{\big(\int_{\mathbb R^n}|\nabla
f_\epsilon|\,dv_{g,n}\big)^{\frac{n}{n-1}}}{\int_{\mathbb
R^n}|f_\epsilon|^{\frac{n}{n-1}}\,dv_{g,n}}\\
&&=\frac{\Big(\lim_{\epsilon\to 0}
\epsilon^{-1}v_{g,n}\big(\{x\in\mathbb R^n\setminus\Omega:\
\hbox{dist}_g(x,\partial\Omega)<\epsilon\}\big)\Big)^\frac{n-1}{n}}{\lim_{\epsilon\to
0}\int_{\mathbb R^n}|f_\epsilon|^{\frac{n}{n-1}}\,dv_{g,n}}\\
&&=
\frac{\big(s_{g,n}(\partial\Omega)\big)^\frac{n}{n-1}}{v_{g,n}(\Omega)}
\end{eqnarray*}
and consequently,
\begin{equation}\label{eq21bb2}
\inf_{f\in C^1_0(\mathbb R^n)}\frac{\big(\int_{\mathbb R^n}|\nabla
f|\,dv_{g,n}\big)^{\frac{n}{n-1}}}{\int_{\mathbb
R^n}|f|^{\frac{n}{n-1}}\,dv_{g,n}}\le\kappa_{g,n}.
\end{equation}
Evidently, (\ref{eq21bb1}) and (\ref{eq21bb2}) imply (\ref{eq21bb}).

\begin{remark}\label{r1} (i) From \cite[Theorem 1.3]{BoHeSa1} and its odd-dimensional analog (cf. \cite{NdXi}) it follows that
there exists a dimensional constant $C_n\ge 1$ such that every
Euclidean manifold $(\mathbb R^n,g)$ with $n\ge 3$ is
$C_n$-biLipschitz equivalent to the background manifold $(\mathbb
R^n,g_0)$ -- in other words -- $e^{nu}$ is comparable to the
Jacobian determinant of a quasiconformal mapping from $\mathbb R^n$
to itself (this guarantees that $e^{nu}$ is a strong
$A^\infty$-weight), and hence (\ref{eq33}) holds, as along as $u\in
C^\infty(\mathbb R^n)$ satisfies (\ref{eq11c}) and
\begin{equation}\label{eq2f}
\int_{\mathbb
R^n}|(-\Delta)^{n/2}u|\,d\mathcal{H}^n<\frac{n2^{n-1}\Gamma(n/2)\pi^{n/2}}{
2^{7+4n}e^{4n(n-1)}3^{2n}}.
\end{equation}
Noticing the strict inclusion $H^1(\mathbb R^n)\subset L^1(\mathbb
R^n)$, we can immediately read off that the requirements
(\ref{eq11}) and (\ref{eq11c}) are a sufficient but not necessary
condition for (\ref{eq33}) to be true.

(ii) Under either the hypotheses of Theorem \ref{t2} or the
conditions (\ref{eq11c}) and (\ref{eq2f}), we can apply
\cite[Theorem]{DaSe} to establish the following inequality
concerning the best Sobolev constant for the conformal metric $g=e^{2u}g_0$:
\begin{equation*}\label{equp}
0<\inf_{f\in C^1_0(\mathbb R^n)}\frac{\big(\int_{\mathbb R^n}|\nabla
f|^p\,dv_{g,n}\big)^\frac1p}{\big(\int_{\mathbb
R^n}|f|^{\frac{pn}{n-p}}\,dv_{g,n}\big)^\frac{n-p}{pn}}<\infty\quad\hbox{where}\quad
1<p<n.
\end{equation*}

\end{remark}

\section{Proof of Theorem \ref{t2a}}

The forthcoming isoperimetric deficit formula (attached to the
Chern-Gauss-Bonnet integral inequality for $g=e^{2u}g_0$) is taken
from the main theorems in \cite{NdXi} and \cite{Xu}.

\begin{lemma}\label{l23} Let $u\in C^\infty(\mathbb R^n)$. If $g=e^{2u}g_0$ is complete conformal metric on $\mathbb R^n$, $n\ge 3$, but also satisfies
(\ref{eq1111}), then
$$
1-\frac{\int_{\mathbb
R^n}Q_{g,n}\,dv_g}{2^{n-1}\Gamma(n/2)\pi^{n/2}}=\lim_{r\to\infty}
\frac{\big(s_g\big(\partial B_r(0)\big)\big)^{n/(n-1)}}{(n\omega_n^{1/n})^{n/(n-1)}v_g\big(B_r(0)\big)}.
$$
\end{lemma}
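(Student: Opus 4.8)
The plan is to reduce the asserted identity to the asymptotic behaviour of $u$ at infinity and then to evaluate the surface area and volume of large balls directly. Since $g$ satisfies \eqref{eq1111}, the results recalled in the introduction (see \cite{Hu0}, \cite{NdXi}, \cite{Xu}) guarantee that $g$ is normal, so $u$ enjoys the representation \eqref{eq11c} for some constant $c$; in particular the integral there converges, which forces the $\log$-weighted bound $\int_{\mathbb{R}^n}\log(2+|y|)\,|(-\Delta)^{n/2}u(y)|\,d\mathcal{H}^n(y)<\infty$. Writing $\gamma_n=2^{n-1}\pi^{n/2}\Gamma(n/2)$ and noting $Q_{g,n}\,dv_{g,n}=(-\Delta)^{n/2}u\,d\mathcal{H}^n$, set
\[
\alpha=\gamma_n^{-1}\int_{\mathbb{R}^n}Q_{g,n}\,dv_{g,n}=\gamma_n^{-1}\int_{\mathbb{R}^n}(-\Delta)^{n/2}u\,d\mathcal{H}^n .
\]
The target of the first step is the expansion $u(x)+\alpha\log|x|\to\bar c$ as $|x|\to\infty$, for a constant $\bar c$, with the approach uniform on the spheres $\partial B_r(0)$.

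To get this, add $\alpha\log|x|$ to both sides of \eqref{eq11c} and use $\alpha\gamma_n=\int(-\Delta)^{n/2}u\,d\mathcal{H}^n$ to obtain
\[
\gamma_n\big(u(x)-c\big)+\alpha\gamma_n\log|x|=\int_{\mathbb{R}^n}\Big(\log|y|+\log\tfrac{|x|}{|x-y|}\Big)(-\Delta)^{n/2}u(y)\,d\mathcal{H}^n(y),
\]
and split the integral over $\{|x-y|<1\}$ and $\{|x-y|\ge1\}$. On the near-diagonal part the only delicate term is $\log\tfrac1{|x-y|}$, which is locally integrable; since that region runs off to infinity one has $\int_{|x-y|<1}|(-\Delta)^{n/2}u|\to0$, and the accompanying factor $\log|x|$ is absorbed by the $\log$-weighted integrability, so this part is $o(1)$. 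On the far part, for $|x|\ge2$ one has the direction-independent bound $\big|\log\tfrac{|x|}{|x-y|}\big|\lesssim 1+\log(2+|y|)$, which is integrable against $|(-\Delta)^{n/2}u|\,d\mathcal{H}^n$, while for each fixed $y$ the integrand tends to $\log|y|$; dominated convergence then yields $\gamma_n(u(x)-c)+\alpha\gamma_n\log|x|\to\int_{\mathbb{R}^n}\log|y|\,(-\Delta)^{n/2}u(y)\,d\mathcal{H}^n(y)=:\gamma_n(\bar c-c)$. Running the same estimates with bounds uniform in $x/|x|$ upgrades this to uniform convergence on $\partial B_r(0)$.

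In the last step I would insert the expansion into the geometric quantities. Using $\mathcal{H}^{n-1}(S^{n-1})=n\omega_n$,
\[
s_{g,n}\big(\partial B_r(0)\big)=r^{(n-1)(1-\alpha)}\int_{S^{n-1}}e^{(n-1)(u(r\theta)+\alpha\log r)}\,d\mathcal{H}^{n-1}(\theta)\sim n\omega_n e^{(n-1)\bar c}\,r^{(n-1)(1-\alpha)},
\]
and, in polar coordinates, $v_{g,n}\big(B_r(0)\big)=\int_0^r\rho^{n-1}\int_{S^{n-1}}e^{nu(\rho\theta)}\,d\mathcal{H}^{n-1}(\theta)\,d\rho$. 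The Chern--Gauss--Bonnet integral inequality attached to \eqref{eq1111} gives $\alpha\le1$; for $\alpha<1$ the asymptotics $e^{nu(\rho\theta)}\sim e^{n\bar c}\rho^{-n\alpha}$ make the tail $\int^r\rho^{n-1-n\alpha}\,d\rho\sim\frac{r^{n(1-\alpha)}}{n(1-\alpha)}$ dominate the bounded contribution of any fixed ball, so $v_{g,n}\big(B_r(0)\big)\sim\frac{\omega_n e^{n\bar c}}{1-\alpha}\,r^{n(1-\alpha)}$. Forming the quotient, the $e^{\bar c}$ factors, the powers of $r$, and the powers of $n$ and $\omega_n$ cancel against $(n\omega_n^{1/n})^{n/(n-1)}=n^{n/(n-1)}\omega_n^{1/(n-1)}$, leaving
\[
\lim_{r\to\infty}\frac{\big(s_{g,n}(\partial B_r(0))\big)^{n/(n-1)}}{(n\omega_n^{1/n})^{n/(n-1)}\,v_{g,n}\big(B_r(0)\big)}=1-\alpha=1-\frac{\int_{\mathbb{R}^n}Q_{g,n}\,dv_{g,n}}{2^{n-1}\Gamma(n/2)\pi^{n/2}},
\]
while the degenerate case $\alpha=1$, where $v_{g,n}(B_r(0))\sim n\omega_n e^{n\bar c}\log r$ and both sides are $0$, is checked directly. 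The main obstacle is the first step --- proving the expansion of $u$ \emph{uniformly} over directions on large spheres, which is exactly where the $\log$-weighted $L^1$ control of $(-\Delta)^{n/2}u$ (equivalently of the Q-curvature mass) and the uniform near-diagonal smallness of its logarithmic potential are needed --- together with the bookkeeping in the volume estimate showing that the contribution of a fixed ball is negligible, which rests on completeness and \eqref{eq1111} forcing $\alpha\le1$.
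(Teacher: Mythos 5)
The paper itself gives no argument here: Lemma~\ref{l23} is simply imported, with the sentence preceding it stating that the formula ``is taken from the main theorems in \cite{NdXi} and \cite{Xu}.'' So what you have written is not a paraphrase of a proof appearing in this paper; it is your own reconstruction of what those references do. Your overall architecture --- derive from the normal representation the asymptotic $u(x)=-\alpha\log|x|+\bar c+o(1)$ uniformly on spheres, then evaluate $s_{g,n}(\partial B_r(0))$ and $v_{g,n}(B_r(0))$ in polar coordinates and form the quotient --- is indeed the standard Finn/Huber strategy that those references follow, and your bookkeeping in the last step (the cancellation of the $r$-powers and of $n,\omega_n,e^{\bar c}$ to leave $1-\alpha$) is correct.

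There is, however, a genuine gap at the very beginning. You assert that because the integral in \eqref{eq11c} converges, this ``forces the $\log$-weighted bound $\int_{\mathbb R^n}\log(2+|y|)\,|(-\Delta)^{n/2}u(y)|\,d\mathcal H^n(y)<\infty$.'' That implication is false. For a fixed $x$, the kernel $\log\bigl(|y|/|x-y|\bigr)$ tends to $0$ as $|y|\to\infty$ and is bounded outside any neighbourhood of $y=x$, while near $y=x$ its logarithmic singularity is locally integrable against the smooth function $(-\Delta)^{n/2}u$; so the representation integral converges for every $x$ as soon as $(-\Delta)^{n/2}u\in L^1(\mathbb R^n)$, with no $\log$-weight control whatsoever. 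The $\log$-weighted integrability is precisely the delicate point: it is a nontrivial consequence of the completeness of $g$ together with the curvature hypotheses in \eqref{eq1111}, obtained through a Cohn--Vossen/Huber-type argument, and this is the core of what the cited theorems in \cite{NdXi} and \cite{Xu} actually prove. As written, your dominated-convergence step therefore has no domination. Likewise, the inequality $\alpha\le 1$ that you later invoke as ``the Chern--Gauss--Bonnet integral inequality attached to \eqref{eq1111}'' is itself another output of the same references and is not available for free; you need to either establish it or explicitly quote it as an input. In short: the skeleton is right, but the two ingredients you treat as immediate (the $\log$-weighted bound and $\alpha\le1$) are exactly where the real work in \cite{NdXi},\cite{Xu} lives.
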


\smallskip

\noindent{\bf Proof Theorem \ref{t2a}.}\ \ This follows from Lemma
\ref{l23}, Theorem \ref{t2}, the estimate
$$
\int_{\mathbb
R^n}|Q_{g,n}|\,dv_{g,n}=\|(-\Delta)^{n/2}u\|_{L^1}\le\|(-\Delta)^{n/2}u\|_{H^1},
$$
the vanishing integral condition
$$
\int_{\mathbb
R^n}(-\Delta)^{n/2}u\,d\mathcal{H}^n=0\quad\hbox{for}\quad
(-\Delta)^{n/2}u\in H^1(\mathbb R^n),
$$
and the evident inequality
$$
\inf_{\Omega\in BDC(\mathbb R^n)}\frac{\big(s_{g,n}\big(\partial
\Omega\big)\big)^{\frac{n}{n-1}}}{v_{g,n}\big(\Omega\big)}\le\lim_{r\to\infty}
\frac{\big(s_{g,n}\big(\partial
B_r(0)\big)\big)^{\frac{n}{n-1}}}{v_{g,n}\big(B_r(0)\big)}.
$$

Next, we handle the equality case of (\ref{eq44}). If $g=g_0$, then
$u=0$ which derives
$$
{\kappa_{g,n}}=\kappa_{g_0,n}={(n\omega_n^\frac{1}{n})^\frac{n}{n-1}}.
$$
Conversely, suppose
$\kappa_{g,n}=(n\omega_n^\frac{1}{n})^\frac{n}{n-1}$. Then
$$
\inf_{\Omega\in BDC(\mathbb R^n)}\frac{\big(s_{g,n}\big(\partial
\Omega\big)\big)^{\frac{n}{n-1}}}{v_{g,n}\big(\Omega\big)}={(n\omega_n^\frac{1}{n})^\frac{n}{n-1}}
$$
Now from the formula
$$
\frac{d\,v_{g,n}\big(B_r(x)\big)}{dr}=s_{g,n}\big(\partial
B_r(x)\big)\quad\hbox{for}\quad x\in\mathbb{R}^n\quad\hbox{and}\quad
r>0
$$
it follows that
$$
{(n\omega_n^\frac{1}{n})^\frac{n}{n-1}}\le\frac{\big(s_{g,n}\big(\partial
B_r(x)\big)\big)^{\frac{n}{n-1}}}{v_{g,n}\big(B_r(x)\big)}=\frac{\Big(\frac{d\,v_{g,n}\big(B_r(x)\big)}{dr}\Big)^{\frac{n}{n-1}}}
{v_{g,n}\big(B_r(x)\big)},
$$
namely,
$$
n\omega_n^\frac{1}{n}\le\Big(v_{g,n}\big(B_r(x)\big)\Big)^{\frac1n-1}
\frac{d\,v_{g,n}\big(B_r(x)\big)}{dr}.
$$
An integration acting on this last inequality gives
\begin{equation}\label{eq3a}
\omega_n r^n\le v_{g,n}\big(B_r(x)\big).
\end{equation}

On the other hand, the geometric interpretation of the scalar
curvature reveals (cf. \cite[3.98 Theorem]{GaHuLa})
$$
\frac{v_{g,n}\big(B_r(x)\big)}{\omega_n
r^n}=1-\frac{S_{g,n}(x)}{6(n+2)}r^2+o(r^2)\quad\hbox{as}\quad r\to
0.
$$
Since $S_{g,n}(x)\ge 0$ for $x\in\mathbb R^n$, we conclude
\begin{equation}\label{eq3b}
\lim_{r\to 0}\frac{v_{g,n}\big(B_r(x)\big)}{\omega_n r^n}\le 1.
\end{equation}
Using the previous estimates (\ref{eq3a})-(\ref{eq3b}) and the
fundamental theorem of Lebesgue (cf. \cite[pp.4-5]{Ste}), we find
$$
e^{nu(x)}=\lim_{r\to 0}(\omega_n
r^n)^{-1}\int_{B_r(x)}e^{nu}\,d\mathcal{H}^n=\lim_{r\to
0}\frac{v_{g,n}\big(B_r(x)\big)}{\omega_n r^n}=1\quad\hbox{for}\quad
x\in\mathbb R^n,
$$
whence getting $u=0$ and so $g=g_0$.
\medskip

\begin{remark}\label{r2} (i) Under the equality result of Theorem \ref{t2a}, the proof of \cite[Proposition
8.2]{He}, along with the extremal function
$$
f(x)=(1+|x|^\frac{p}{p-1})^\frac{p-n}{p}\quad\hbox{for}\quad
x\in\mathbb R^n,
$$
yields that for any $p\in (1,n)$ the well-known best Sobolev
constant
$$
\inf_{f\in C^1_0(\mathbb R^n)}\frac{\big(\int_{\mathbb R^n}|\nabla
f|^p\,dv_{g_0,n}\big)^\frac1p}{\big(\int_{\mathbb
R^n}|f|^{\frac{pn}{n-p}}\,dv_{g_0,n}\big)^\frac{n-p}{pn}}
$$
is equal to
$$
\left(\frac{n^\frac1{p-1}(n-p)}{p-1}\right)^{1-\frac1p}\left(\frac{\omega_n\Gamma\big(\frac{n}{p}\big)\Gamma\big(1+n-\frac{n}{p}\big)}{\Gamma(n)}\right)^\frac1n.
$$
It seems natural to conjecture that for any complete conformal
metric $g=e^{2u}g_0$ satisfying (\ref{eq111}), the inequality
$$
\inf_{f\in
C^1_0(\mathbb R^n)}\frac{\big(\int_{\mathbb R^n}|\nabla
f|^p\,dv_{g,n}\big)^\frac1p}{\big(\int_{\mathbb
R^n}|f|^{\frac{pn}{n-p}}\,dv_{g,n}\big)^\frac{n-p}{pn}}\le\left(\frac{n^\frac1{p-1}(n-p)}{p-1}\right)^{1-\frac1p}\left(\frac{\omega_n\Gamma\big(\frac{n}{p}\big)\Gamma\big(1+n-\frac{n}{p}\big)}{\Gamma(n)}\right)^\frac1n
$$
holds and the last equality happens when and only when $g=g_0$. Obviously, the last infimum is positive under the above-pointed suppositions.

(ii) Maybe it is appropriate to recall the so-called ``non-compact
Yamabe problem", which states: {\it On a smooth, complete,
non-compact $3\le n$-dimensional Riemannian manifold $(M,g)$, does
there exist a complete conformal metric of constant scalar
curvature?} Although this problem was answered negatively through Z.
Jin's counterexample in \cite{Jin}, it would still be of independent
interest to find a criterion for the $1$-scalar curvature equation
\begin{equation}\label{eqcur}
S_{g,n}=-2(n-1)e^{-2u}\Big(\Delta u+\frac{n-2}{2}|\nabla
u|^2\Big)=1
\end{equation}
to be solvable in a suitable function space. From Theorem \ref{t2a}
it is seen that if this equation has a solution $u$ belonging to
$C^\infty(\mathbb R^n)$ and obeying $(-\Delta)^{n/2}u\in H^1(\mathbb
R^n)$ then (\ref{eq44}) holds. A follow-up question arises: {\it Is
(\ref{eq44}) a sufficient condition for the existence of a solution
to (\ref{eqcur})?}
\end{remark}

\section{Proof of Theorem \ref{t3}}

To prove Theorem \ref{t3} let us review the so-called $C^1$ Sard
type theorem (cf. \cite[Theorem 10.4]{Sim})

\begin{lemma}\label{l30} Given a bounded domain $\Omega\subset\mathbb R^n$ with $n\ge 2$ let $f$ be a real-valued $C^1$ function on $\Omega$ with
$$
\sup_{x\in\Omega}\big(|f(x)|+|\nabla f(x)|\big)<\infty.
$$
Then
$$
f^{-1}(t)=\big(f^{-1}(t)\setminus\{x\in\Omega:\ \nabla f(x)=0\}\big)\cup\big(f^{-1}(t)\cap \{x\in\Omega:\ \nabla f(x)=0\}\big)
$$
holds for almost all $t\in f(\Omega)$, where
$f^{-1}(t)\setminus\{x\in\Omega:\ \nabla f(x)=0\}$ is an
$(n-1)$-dimensional $C^1$-submanifold with
$$
\mathcal{H}^{n-1}\big(f^{-1}(t)\cap\{x\in\Omega:\ \nabla
f(x)=0\}\big)=0\quad\hbox{and}\quad
\mathcal{H}^{n-1}\big(f^{-1}(t)\big)<\infty.
$$
Consequently, if $\mathsf{S}_f$ consists of the above $t$'s then $\mathcal{H}^1\big(f(\Omega)\setminus\mathsf{S}_f\big)=0$.
\end{lemma}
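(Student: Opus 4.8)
The plan is to derive every assertion of the lemma from just two classical inputs: the implicit function theorem on the noncritical part, and the coarea formula on all of $\Omega$. Write $Z=\{x\in\Omega:\ \nabla f(x)=0\}$ for the critical set, which is relatively closed in $\Omega$. The bounded--gradient hypothesis $\sup_\Omega\big(|f|+|\nabla f|\big)<\infty$ enters immediately: it makes $f$ locally Lipschitz with bounded gradient, hence (by an exhaustion of $\Omega$ by open sets $\Omega_k\Subset\Omega$, on each of which $f$ is Lipschitz and may be extended to $\mathbb R^n$ via McShane) a function to which the coarea formula applies with $\mathcal H^{n-1}$-measure on the level sets $f^{-1}(t)$ taken inside $\Omega$. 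This, together with $\mathcal H^n(\Omega)<\infty$, is where the hypotheses are actually used.

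First I would dispose of the noncritical part. On the open set $\Omega\setminus Z$ one has $\nabla f\neq 0$, so $f$ is a $C^1$ submersion there, and the implicit function theorem shows that $f^{-1}(t)\setminus Z=f^{-1}(t)\cap(\Omega\setminus Z)$ is, for \emph{every} $t\in\mathbb R$, an embedded $(n-1)$-dimensional $C^1$-submanifold of $\mathbb R^n$. The set-theoretic identity $f^{-1}(t)=\big(f^{-1}(t)\setminus Z\big)\cup\big(f^{-1}(t)\cap Z\big)$ is trivially true for every $t$; the real content of the lemma is that for almost every $t$ the second piece is $\mathcal H^{n-1}$-negligible and the whole slice has finite $\mathcal H^{n-1}$-measure.

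Both remaining facts come from the coarea formula $\int_\Omega g\,|\nabla f|\,d\mathcal H^n=\int_{\mathbb R}\big(\int_{f^{-1}(t)}g\,d\mathcal H^{n-1}\big)\,dt$ for nonnegative Borel $g$ (cf. the co-area formula already cited in the paper). Taking $g=\mathbf 1_Z$ and using $|\nabla f|\equiv 0$ on $Z$ yields $\int_{\mathbb R}\mathcal H^{n-1}\big(f^{-1}(t)\cap Z\big)\,dt=0$, hence $\mathcal H^{n-1}\big(f^{-1}(t)\cap Z\big)=0$ for a.e. $t$; this is the sole Sard-type ingredient, and it is precisely the weak slicewise form that survives at $C^1$ regularity (unlike the classical statement on critical values, which fails for $C^1$ maps). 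Taking $g=\mathbf 1_\Omega$ and using the boundedness of $\Omega$ and of $|\nabla f|$ gives $\int_{\mathbb R}\mathcal H^{n-1}\big(f^{-1}(t)\big)\,dt=\int_\Omega|\nabla f|\,d\mathcal H^n\le\|\nabla f\|_{L^\infty(\Omega)}\,\mathcal H^n(\Omega)<\infty$, so $\mathcal H^{n-1}\big(f^{-1}(t)\big)<\infty$ for a.e. $t$.

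Finally I would let $\mathsf S_f$ be the set of those $t$ at which both of the preceding a.e.-conclusions hold (the submanifold property holds for all $t$). As the intersection of two full-measure sets, $\mathsf S_f$ has full $\mathcal H^1$-measure in $\mathbb R$; and for $t\notin f(\Omega)$ the slice $f^{-1}(t)$ is empty, so every claim holds vacuously there. Hence $f(\Omega)\setminus\mathsf S_f\subset\mathbb R\setminus\mathsf S_f$ has $\mathcal H^1$-measure zero, which is the concluding assertion. The one point needing care — and what I regard as the main (if mild) obstacle — is the legitimacy of the coarea formula at the available regularity of $f$ (only locally Lipschitz on $\Omega$) and the identification of the level sets of the Lipschitz extension with those of $f$ inside $\Omega$; once that bookkeeping is done, the rest is immediate.
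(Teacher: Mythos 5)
Your proof is correct. Note that the paper gives no argument of its own for this lemma --- it simply cites Simon's Theorem~10.4 --- and your coarea-based derivation is the standard one underlying that citation: the set identity is trivial, the implicit function theorem handles the noncritical slice for every $t$, and the two genuinely a.e.\ assertions (negligibility of $f^{-1}(t)\cap\{\nabla f=0\}$ and finiteness of $\mathcal H^{n-1}(f^{-1}(t))$) drop out of the coarea formula tested against $\mathbf 1_{\{\nabla f=0\}}$ and $\mathbf 1_\Omega$, the latter using exactly the hypotheses that $\Omega$ is bounded and $|\nabla f|$ is bounded. You also correctly flag the right point of emphasis, namely that the classical Sard theorem on critical \emph{values} fails at $C^1$ regularity for $n\ge 2$, whereas the slicewise form survives. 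The one concern you raise --- validity of coarea at this regularity --- is more easily dispatched than you suggest: the coarea formula already holds for $W^{1,1}_{\mathrm{loc}}$ functions against nonnegative Borel integrands on open sets, so the $C^1$-with-bounded-gradient hypothesis is ample and no global Lipschitz (McShane) extension or exhaustion is actually needed.
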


With the help of Lemma \ref{l30} and the asymptotic behavior of the
Green's function of $\Omega\in BRD(\mathbb R^n)$ below:
$$
G_{\Omega,n}(x,y)=-(n\omega_n)^\frac{1}{1-n}\log|x-y|+O(1)\quad\hbox{as}\quad
x\to y\quad\hbox{in}\quad\mathbb R^n,
$$
W. Wang discovered an integral formula for the $n$-Green
function (cf. \cite[Lemma 4.1]{Wa}) as follows.

\begin{lemma}\label{l31} Let $y\in\Omega\in BRD(\mathbb R^n)$ with $n\ge 2$. Then
$$
\int_{\{x\in\Omega:\ G_\Omega(x,y)=t\}}|\nabla G_\Omega(\cdot,y)|^{n-1}\,d\mathcal{H}^{n-1}(\cdot)=1
$$
holds for each $t\in\mathsf{S}_{G_\Omega(\cdot,y)}$.
\end{lemma}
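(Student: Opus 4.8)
\smallskip
\noindent The plan is to deduce the identity from the weak form of the defining equation $\Delta_n G_\Omega(\cdot,y)=\delta_y$, the coarea formula, and a Gauss--Green computation on the superlevel sets of $G_\Omega(\cdot,y)$ --- an approach that never uses the precise gradient behavior of the Green function at its pole. Fix $y\in\Omega\in BRD(\mathbb R^n)$, write $G=G_\Omega(\cdot,y)$ and $F=|\nabla G|^{n-2}\nabla G$, and set
\[
\Phi(t)=\int_{\{x\in\Omega:\ G(x)=t\}}|\nabla G|^{n-1}\,d\mathcal H^{n-1}.
\]
First I would record the standard facts: $G$ is $n$-harmonic on $\Omega\setminus\{y\}$, so that $\mathrm{div}\,F=0$ there in the distributional sense and $\int_\Omega F\cdot\nabla\phi\,d\mathcal H^n=\phi(y)$ for every $\phi\in C^\infty_c(\Omega)$; by $C^1$-regularity for $n$-harmonic functions $F$ is continuous on $\Omega\setminus\{y\}$; and the asymptotics $G(x)=-(n\omega_n)^{1/(1-n)}\log|x-y|+O(1)$ quoted before the lemma give $G>0$ in $\Omega$, $G\to 0$ at $\partial\Omega$, $G(x)\to+\infty$ as $x\to y$, $G(\Omega\setminus\{y\})=(0,\infty)$, and --- via interior gradient estimates on the balls $B_{|x-y|/2}(x)$ --- the crude bound $|\nabla G(x)|\lesssim|x-y|^{-1}$ near $y$, whence $F\in L^1_{loc}(\Omega)$.

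The next step is an integral identity. For $0<a<s<\infty$ put $\phi_{a,s}=(s-a)^{-1}\min\{\max\{G-a,0\},s-a\}$; this is Lipschitz, vanishes near $\partial\Omega$, equals $1$ near $y$, satisfies $\nabla\phi_{a,s}=(s-a)^{-1}\,1_{\{a<G<s\}}\nabla G$ a.e., and has $\mathrm{supp}\,\nabla\phi_{a,s}\subseteq\{a\le G\le s\}$, which is a compact subset of $\Omega\setminus\{y\}$. Mollifying $\phi_{a,s}$ and passing to the limit in $\int_\Omega F\cdot\nabla\phi\,d\mathcal H^n=\phi(y)$ --- legitimate since $F\in L^1_{loc}$ and the mollified gradients stay bounded with support in a fixed compact set --- gives $\int_{\{a<G<s\}}|\nabla G|^n\,d\mathcal H^n=s-a$, and since $G$ is $C^1$ near the compact set $\{a\le G\le s\}$ the coarea formula converts this into
\[
\int_a^s\Phi(\sigma)\,d\sigma=s-a\qquad\text{for all }0<a<s<\infty .
\]

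Now fix $t\in\mathsf S_{G_\Omega(\cdot,y)}$ and pick $s\in\mathsf S_{G_\Omega(\cdot,y)}$ with $s>t$. By Lemma~\ref{l30} the level sets $\{G=t\}$ and $\{G=s\}$ have finite $\mathcal H^{n-1}$-measure and are $C^1$-hypersurfaces off an $\mathcal H^{n-1}$-null critical set; hence $E=\{x\in\Omega:\ t<G(x)<s\}$ is a set of finite perimeter whose topological boundary lies in $\{G=t\}\cup\{G=s\}$ and whose reduced boundary carries, $\mathcal H^{n-1}$-a.e., the outward unit normal $-\nabla G/|\nabla G|$ where $G=t$ and $+\nabla G/|\nabla G|$ where $G=s$, while $\overline E$ is compact in $\Omega\setminus\{y\}$ because $G<t$ near $\partial\Omega$ and $G>s$ near $y$. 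Applying the Gauss--Green theorem to the continuous, distributionally divergence-free field $F$ over $E$ --- which I would justify by mollifying $F$, noting that the smooth approximants are divergence-free on a neighborhood of $\overline E$, and letting the mollification parameter tend to $0$, the surface integrals converging because $F$ is continuous and $\mathcal H^{n-1}(\partial^*E)<\infty$ --- gives $0=\int_{\partial^*E}F\cdot\nu_E\,d\mathcal H^{n-1}=-\Phi(t)+\Phi(s)$, since on the two boundary pieces $F\cdot\nu_E$ equals $-|\nabla G|^{n-1}$ and $+|\nabla G|^{n-1}$ respectively. Thus $\Phi\equiv c$ on $\mathsf S_{G_\Omega(\cdot,y)}$ for some constant $c$, and since $\mathcal H^1\big(G(\Omega\setminus\{y\})\setminus\mathsf S_{G_\Omega(\cdot,y)}\big)=0$ and $G(\Omega\setminus\{y\})=(0,\infty)$, the displayed coarea identity becomes $c(s-a)=s-a$; hence $c=1$ and $\Phi(t)=1$.

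The step I expect to be the main obstacle is the rigorous use of the Gauss--Green theorem: $G$ is only $C^{1,\alpha}$, so $F$ is merely continuous, and $E$ has $C^1$ boundary only away from an $\mathcal H^{n-1}$-null critical set, so the classical divergence theorem does not literally apply and one must pass through sets of finite perimeter (Federer's criterion that $\mathcal H^{n-1}(\partial E)<\infty$ forces finite perimeter, together with the identification of the measure-theoretic normal at noncritical points via the implicit function theorem) and a mollification of $F$; a secondary point is the admissibility of the Lipschitz function $\phi_{a,s}$ as a test function against $\delta_y$, which rests on $F\in L^1_{loc}(\Omega)$. One could instead run the Gauss--Green argument on $\{G>t\}\setminus\overline{B_\varepsilon(y)}$ and evaluate the flux of $F$ across $\partial B_\varepsilon(y)$ as $\varepsilon\to0$, but that route needs the sharp gradient asymptotics of $G$ at the pole, whereas the argument above uses only the bound $|\nabla G|\lesssim|x-y|^{-1}$ and the constancy of $\Phi$ on $\mathsf S_{G_\Omega(\cdot,y)}$.
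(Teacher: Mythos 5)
Your argument is correct, and it is worth noting that the paper itself does not prove this lemma---it cites Wang \cite[Lemma 4.1]{Wa}, and the surrounding text makes clear that the cited proof rests on Lemma~\ref{l30} together with the pole asymptotics $G_{\Omega,n}(x,y)=-(n\omega_n)^{1/(1-n)}\log|x-y|+O(1)$, i.e.\ on computing the flux of $F=|\nabla G|^{n-2}\nabla G$ across small spheres $\partial B_\varepsilon(y)$ and sending $\varepsilon\to 0$, which is exactly the alternative route you describe at the end and correctly observe requires sharp information on $\nabla G$ at the pole. Your route is genuinely different: you split the normalization $\Phi\equiv 1$ into two independent facts---constancy of $\Phi$ on $\mathsf{S}_{G_\Omega(\cdot,y)}$, obtained by applying Gauss--Green to the divergence-free field $F$ over the finite-perimeter annuli $\{t<G<s\}$ whose closures avoid $y$, and the averaged identity $\int_a^s\Phi\,d\sigma=s-a$, obtained by testing the weak $\delta_y$-equation against the truncated Green function $\phi_{a,s}$ and invoking the coarea formula. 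What this buys is that no pointwise behavior of $\nabla G$ near $y$ enters; in fact even the bound $|\nabla G(x)|\lesssim|x-y|^{-1}$ you record is dispensable, since both $\operatorname{supp}\nabla\phi_{a,s}$ and $\partial^*\{t<G<s\}$ lie in compact subsets of $\Omega\setminus\{y\}$, where $F$ is continuous, so $F\in L^1_{\mathrm{loc}}(\Omega\setminus\{y\})$ already suffices for every limit passage. What it costs is heavier geometric measure theory---Federer's criterion, the structure of the reduced boundary, and Gauss--Green for a merely continuous divergence-measure field via mollification of $F$---all of which you justify correctly; both routes use Lemma~\ref{l30} in an essential way to ensure the critical set on the level surfaces is $\mathcal H^{n-1}$-negligible.
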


\smallskip
\noindent{\bf Proof of Theorem \ref{t3}.}\ \ (i) For $t\ge 0$ and
$y\in\Omega\in BRD(\mathbb R^n)$ set
$$
\Omega(t,y;G)=\big\{x\in\Omega:\ G_{\Omega,n}(x,y)\ge t\big\}.
$$
Then $G_\Omega(\cdot,y)$ is of $C^1$ class on
$\Omega\setminus\{y\}$, and hence for
$t\in\mathsf{S}_{G_{\Omega,n}}$ we have
$$
\partial\Omega(t,y;G)=\{x\in\Omega:\ G_{\Omega,n}(x,y)=t\},
$$
which is the pre-image of $t$ under $G_{\Omega,n}(\cdot,y)$. From
now on, we will assume
$$
F(t,y)=v_{g,n}\big(\Omega(t,y;G)\big)=\int_{\Omega(t,y;G)}
e^{nu}\,d\mathcal{H}^n.
$$
On $\mathsf{S}_{G_{\Omega,n}}$ this function decreases -- in fact
$F(t,y)$ enjoys the differential equation (cf. \cite[p.53, Lemma
2.5]{Ban})
\begin{equation}\label{eq33a}
-\frac{dF(t,y)}{dt}=\int_{\partial\Omega(t,y;G)}\frac{e^{nu(x)}}{|\nabla
G_\Omega(x,y)|}\, d\mathcal{H}^{n-1}(x)\quad\hbox{for}\quad
t\in\mathsf{S}_{G_\Omega,n}.
\end{equation}
Applying H\"older's inequality, Lemma \ref{l31} and Theorem
\ref{t2}, we further derive from (\ref{eq33a}) that for
$t\in\mathsf{S}_{G_{\Omega,n}}$,
\begin{eqnarray*}
&&\Big(-\frac{dF(t,y)}{dt}\Big)^\frac{n-1}n\\
&&=\left(\int_{\partial\Omega(t,y;G)}\frac{e^{nu(x)}}{|\nabla
G_\Omega(x,y)|}\, d\mathcal{H}^{n-1}(x)\right)^\frac{n-1}n
\left(\int_{\partial\Omega(t,y;G)}\frac{d\mathcal{H}^{n-1}(x)}{|\nabla G_\Omega(x,y)|^{1-n}}\right)^\frac{1}n\\
&&\ge\int_{\partial\Omega(t,y;G)}{e^{(n-1)u(x)}}\, d\mathcal{H}^{n-1}(x)\\
&&\ge\kappa_{g,n}^\frac{n-1}{n}\Big(v_{g,n}\big(\Omega(t,y;G)\big)\Big)^\frac{n-1}{n}\\
&&=\kappa_{g,n}^\frac{n-1}{n}\big(F(t,y)\big)^\frac{n-1}{n}.
\end{eqnarray*}
The above inequalities yield
$$
\frac{d}{dt}\Big(e^{\kappa_{g,n}t}F(t,y)\Big)=e^{\kappa_{g,n}t}\left(\kappa_{g,n}F(t,y)+\frac{dF(t,y)}{dt}\right)\le
0.
$$
In other words, $e^{\kappa_{g,n}t}F(t,y)$ decreases with
$t\in\mathsf{S}_{G_{\Omega,n}}$.

Because Lemma \ref{l30} illustrates
$$
\mathcal{H}^1\big(\{t=G_{\Omega,n}(x,y)\in (0,\infty]:\
x\in\Omega\}\setminus\mathsf{S}_{G_{\Omega,n}}\big)=0,
$$
we can treat $F(\cdot,y)$ as a continuous and decreasing function on
$[0,\infty)$ but also $e^{\kappa_{g,n}t}F(t,y)$ as a decreasing
function with $t\in [0,\infty)$. Note that if $p>0$ and
$$
F_p(t,y)=\int_{\Omega(t,y;G)}\big(G_{\Omega,n}(x,y)\big)^p
e^{nu(x)}\, d\mathcal{H}^n(x),
$$
then
$$
F_p(0,y)=\int_{\Omega}\big(G_{\Omega,n}(x,y)\big)^p e^{nu(x)}\,
d\mathcal{H}^n(x)
$$
and hence, using the layer cake representation and integrating by
part, we deduce
$$
F_p(t,y)=-\int_t^\infty r^p\, dF(r,y).
$$
So, without loss of generality we may assume $F_q(0,y)<\infty$ for
$0\le q<p<\infty$ -- otherwise there is nothing to argue. Since
$d(e^{\kappa_{g,n}t}F(t,y))/dt\le 0$ , we conclude (via an
integration by part) that
$$
F_q(t,y)\le\kappa_{g,n}e^{\kappa_{g,n}t}\int_t^\infty r^q
e^{-\kappa_{g,n}r}\,dr
$$
and consequently,
$$
\frac{d}{dt}\log F_q(t,y)\le\frac{d}{dt}\log\int_t^\infty r^q
e^{-\kappa_{g,n}r}\,dr.
$$
Integrating this last differential inequality from $0$ to $t$, we
get
$$
\frac{F_q(t,y)}{F_q(0,y)}\le\frac{\kappa_{g,n}^{1+q}}{\Gamma(1+q)}\int_t^\infty
r^q e^{-\kappa_{g,n}r}dr.
$$
This estimate produces
\begin{eqnarray*}
&&F_p(0,y)\\
&&=-\int_0^\infty t^{p-q}t^q\,dF(t,y)\\
&&=(p-q)\int_0^\infty t^{p-q-1}F_q(t,y)\,dt\\
&&\le\frac{(p-q)\kappa_{g,n}^{q+1}F_{q}(0,y)}{\Gamma(q+1)}\int_0^\infty
t^{p-q-1}\left(\int_t^\infty r^qe^{-\kappa_{g,n}r}\,dr\right)\,dt\\
&&=\kappa_{g,n}^{q-p}\left(\frac{\Gamma(p+1)}{\Gamma(q+1)}\right)F_q(0,y),
\end{eqnarray*}
which in turn verifies (\ref{eq55}).

Furthermore, $g=g_0$ implies $u=0$ and
$\kappa_{g,n}=(n\omega_n^\frac1n)^\frac{n}{n-1}$. Now that
\begin{equation}\label{eq4b}
G_{B_1(0),n}(0,y)=-(n\omega_n)^{\frac1{1-n}}\log{|y|}\quad\hbox{for}\quad
y\in B_1(0),
\end{equation}
$g=g_0$ yields also

\begin{eqnarray*}
&&\frac{\kappa_{g,n}^{-q}\Gamma(q+1)}{\kappa_{g,n}^{-p}\Gamma(p+1)}\\
&&\le\inf_{x\in\Omega\in BRD(\mathbb
R^n)}\frac{\int_{\Omega}\big(G_{\Omega,n}(x,y)\big)^q\,dv_{g,n}(y)}{\int_{\Omega}\big(G_{\Omega,n}(x,y)\big)^p\,dv_{g,n}(y)}\\
&&\le\frac{\int_{B_1(0)}\big(G_{B_1(0),n}(0,y)\big)^q\,d\mathcal{H}^n(y)}{\int_{B_1(0)}\big(G_{B_1(0),n}(0,y)\big)^p\,d\mathcal{H}^n(y)}\\
&&=\frac{(n\omega_n)^{1-\frac{q}{n-1}}\int_0^1\Big(\log\frac1r\Big)^qr^{n-1}\,dr}{(n\omega_n)^{1-\frac{p}{n-1}}\int_0^1\Big(\log\frac1r\Big)^pr^{n-1}\,dr}\\
&&=\frac{(n\omega_n^\frac{1}{n})^{-\frac{qn}{n-1}}\Gamma(q+1)}{(n\omega_n^\frac{1}{n})^{-\frac{pn}{n-1}}\Gamma(p+1)}.
\end{eqnarray*}
Thus, the equality in (\ref{eq55}) occurs.

(ii) From Theorem \ref{t2} and the case $0=q<p<\infty$ of (i) it
follows that $\kappa_{g,n}>0$ and for any $x\in\Omega\in BRD(\mathbb
R^n)$,
$$
\frac{\kappa_{g,n}^p}{\Gamma(p+1)}\le\frac{v_{g,n}(\Omega)}{\int_{\Omega}\big(G_{\Omega,n}(x,\cdot)\big)^p\,dv_{g,n}(\cdot)}\le\frac{\kappa_{g,n}^{-1}\big(s_{g,n}(\partial\Omega)\big)^\frac{n}{n-1}}{\int_{\Omega}\big(G_{\Omega,n}(x,\cdot)\big)^p\,dv_{g,n}(\cdot)}.
$$
This derives (\ref{eq55a}). When $g=g_0$, as done in the last part of the foregoing (i) a calculation with (\ref{eq4b}) yields
$$
\frac{\kappa_{g_0,n}^{p+1}}{\Gamma(p+1)}=\frac{(n\omega_n^\frac1n)^\frac{n(p+1)}{n-1}}{\Gamma(p+1)}=\frac{\big(s_{g_0,n}(\partial
B_1(0))\big)^\frac{n}{n-1}}{\int_{B_1(0)}\big(G_{B_1(0),n}(0,\cdot)\big)^p\,dv_{g_0,n}(\cdot)},
$$
whence reaching the equality of (\ref{eq55a}).

\medskip

\begin{remark}\label{r3}{\rm(i)} We have not been able to prove
whether or not the equality of either (\ref{eq55}) or (\ref{eq55a})
implies $g=g_0$. Nevertheless we strongly conjecture that it has an
affirmative answer.

{\rm(ii)} When $w$ is the Jacobian determinant $J_f$ of a
quasiconformal map $f$ from $\mathbb R^n$ to itself, $w$ is a strong
$A_\infty$-weight and so by Lemma \ref{l21},
$$
\kappa_w=\inf_{\Omega\in BDC(\mathbb R^n)}\frac{\Big(\int_{\partial\Omega}w^\frac{n-1}{n}\, d\mathcal{H}^{n-1}\Big)^\frac{n}{n-1}}{\int_{\Omega}w\,d\mathcal{H}^n}>0.
$$
A careful look at the proof of Theorem \ref{t3} indicates that this theorem is still true with $\kappa_w$ replacing $\kappa_{g,n}$. In particular,
$$
\int_{\Omega}\big(G_{\Omega,n}(\cdot,y)\big)^p J_f(\cdot)\, d\mathcal{H}^n(\cdot)\le\frac{\Gamma(p+1)}{\kappa_w^p}\int_{\Omega}J_f\,d\mathcal{H}^n,
$$
where $y\in\Omega\in BDC(\mathbb R^n)$ and $0\le p<\infty$. This
observation suggests a future study of the quasiregular Q-space
${QRQ}_p(\Omega;\mathbb R^n)$ which comprises all quasiregular
mappings $f:\Omega\to\mathbb R^n$ with
$$
\sup_{y\in\Omega}\int_{\Omega}\big(G_{\Omega,n}(x,y)\big)^p |f'(x)|^n\,d\mathcal{H}^n(x)\approx\sup_{y\in\Omega}\int_{\Omega}\big(G_{\Omega,n}(x,y)\big)^p J_f(x)\,d\mathcal{H}^n(x)<\infty.
$$
Here $f'(x)$ means the formal derivative of $f$ at $x$, that is, the
matrix $[\partial f_j(x)/\partial x_k]_{n\times n}$ of the partial
derivatives $\partial f_j(x)/\partial x_k$, $j,k=1,...,n$, of the
coordinate functions $f_1,...,f_n$ of $f$. Moreover,
$|f'(x)|=\max_{h\in\partial B_1(0)}|f'(x)h|$. And, a continuous
mapping $f:\Omega\to\mathbb R^n$ is called quasiregular provided
that its coordinate functions $f_1,...,f_n$ lie in the local
homogeneous $n$-Sobolev space $\dot{W}^{1,n}_{loc}(\Omega)$, i.e.,
$$
\int_{O}|\nabla f_j|^n\,d\mathcal{H}^n<\infty,\quad j=1,...,n
$$
for each open set $O$ compactly contained in $\Omega$, and that
there exists a constant $\mathcal{K}\ge 1$ such that
\begin{equation}\label{eqquasi}
J_f(x)\le|f'(x)|^n\le \mathcal{K} J_f(x)
\end{equation}
is valid for almost all $x\in\Omega$. Especially, the quasiregular
homeomorphism is said to be a quasiconformal mapping. When $n=2$ and
$\mathcal{K}=1$ in (\ref{eqquasi}) the concept of
quasiregular/quasiconformal returns to the concept of
holomorphic/conformal. See also: \cite{HeKiMa} for more information
on the quasiregular mappings, \cite{XiJ1}-\cite{XiJ2} for an
overview of the recent research results on the holomorphic and
geometric $Q_p$-spaces on the unit disk of $\mathbb R^2$, and
\cite{La} for an investigation of the $Q_p$-type function space over
$B_1(0)$ introduced by a kind of invariance under M\"obius
transformations.
\end{remark}

\bibliographystyle{amsplain}

\end{document}